\documentclass[11pt,reqno]{amsart}

\usepackage[T1]{fontenc}
\usepackage[utf8]{inputenc}
\usepackage{amsmath,amssymb,amsfonts,amsthm,mathtools}
\usepackage{booktabs}
\usepackage{microtype}
\usepackage[hidelinks]{hyperref}
\hypersetup{
  pdftitle={The Complete Cubic Walsh Spectrum of a Permutation-Inverse Boolean Family},
  pdfauthor={Kaimin Cheng},
  pdfkeywords={finite fields, Boolean functions, Walsh spectra, plateaued functions, Kasami APN functions}
}

\makeatletter
\@namedef{subjclassname@2020}{\textup{2020} Mathematics Subject Classification}
\makeatother

\theoremstyle{plain}
\newtheorem{theorem}{Theorem}[section]
\newtheorem{lemma}[theorem]{Lemma}
\newtheorem{proposition}[theorem]{Proposition}
\newtheorem{corollary}[theorem]{Corollary}

\theoremstyle{remark}
\newtheorem{remark}[theorem]{Remark}

\newcommand{\Fq}{\mathbb F_q}
\newcommand{\Fqq}{\mathbb F_{q^2}}
\newcommand{\Ftwo}{\mathbb F_2}
\newcommand{\F}{\mathbb F}
\newcommand{\Tr}{\operatorname{Tr}}

\newcommand{\abs}[1]{\left|#1\right|}
\newcommand{\set}[1]{\left\{#1\right\}}
\newcommand{\W}{W}
\newcommand{\chiq}[1]{\chi_q\!\left(#1\right)}

\title[Complete cubic Walsh spectrum]{The Complete Cubic Walsh Spectrum of a Permutation-Inverse Boolean Family}

\author{Kaimin Cheng}
\address{School of Mathematical Sciences, China West Normal University, Nanchong 637002, P. R. China}
\email{ckm20@126.com}

\subjclass[2020]{Primary 06E30; Secondary 11T06, 11T23, 94A60}
\keywords{finite fields, Boolean functions, Walsh spectra, plateaued functions, Kasami APN functions}

\begin{document}

\begin{abstract}
Let $q=2^e$ with $e\ge2$ even, put $d=(q^2+q+1)/3$, and let
$\sigma(X)=X+X^d+X^{dq}$ be the permutation of $\mathbb F_{q^2}$ introduced by
Ding, Qu, Wang, Yuan, and Yuan. For $\alpha\in\mathbb F_q^*$, define the
Boolean function
\[
f_\alpha(x)=\operatorname{Tr}_{q^2}\bigl(\alpha(\sigma^{-1}(x))^3\bigr),
\qquad x\in\mathbb F_{q^2}.
\]
In this paper, we determine the complete Walsh distribution of $f_\alpha$ in the remaining
cubic case $\alpha\in(\mathbb F_q^*)^3$. More precisely, these functions are
not bent but are $2$-plateaued: their Walsh values are precisely $0$ and
$\pm 2q$, with exact multiplicities. The main new tool is a completion
method for the outside Walsh coefficients: the punctured Fourier transform
arising from the outside reduction is filled on the missing line, a modification
invisible to outside frequencies, and the completed function is then identified
with a Boolean component of a Kasami APN monomial. The APN property supplies a
fourth-moment identity which, together with the known subfield spectrum and a
Hasse divisibility congruence, forces the pointwise cubic spectrum.
\end{abstract}

\maketitle

\section{Introduction}

\subsection{Background and problem}

Let $q=2^e$ with $e\ge2$ even.  Write $\Tr_q$ and $\Tr_{q^2}$ for the absolute trace maps from $\Fq$ and $\Fqq$ to $\Ftwo$, respectively.  For a Boolean function $f:\Fqq\to\Ftwo$, its Walsh transform is
\[
  \W_f(\beta)=\sum_{x\in\Fqq}(-1)^{f(x)+\Tr_{q^2}(\beta x)},\qquad \beta\in\Fqq.
\]
The function $f$ is bent if $\abs{\W_f(\beta)}=q$ for all $\beta\in\Fqq$.  More generally, the full Walsh distribution records the correlation of $(-1)^f$ with all additive characters.  It determines the nonlinearity and plateauedness of $f$, and it is the basic spectral datum behind the correlation and weight distributions arising from the associated trace-code and sequence constructions.  Bent and plateaued functions are therefore central objects in Boolean function theory, coding theory, cryptography, and sequence design; standard references include \cite{CarletMesnager2016,Mesnager2016}.

Put $d=(q^2+q+1)/3$.  Since $q\equiv1\pmod 3$, this is an integer.  Ding, Qu, Wang, Yuan, and Yuan  \cite{Ding2015} proved that
\[  \sigma(X)=X+X^d+X^{dq}
\]
permutes $\Fqq$. For each $\alpha\in\Fq^*$ define
\begin{equation}\label{eq:falpha-def}
  f_\alpha(x)=\Tr_{q^2}\bigl(\alpha(\sigma^{-1}(x))^3\bigr),\qquad x\in\Fqq.
\end{equation}
Li, Li, Helleseth, and Qu conjectured that $f_\alpha$ is bent if and only if $\alpha$ is not a cube in $\Fq$ \cite{Li2023}.  This conjecture was proved in \cite{Cheng2026}.

The present paper is a sequel to \cite{Cheng2026}.  It does not replace the proof of the bentness conjecture there; rather, it determines the complete Walsh distribution in the remaining cubic case, namely when $\alpha\in(\Fq^*)^3$.

The proof in \cite{Cheng2026} naturally splits the Walsh parameter $\beta$ into the subfield regime $\beta\in\Fq$ and the outside regime $\beta\in\Fqq\setminus\Fq$.  The subfield spectrum is explicit.  If $\alpha$ is not a cube, then $\W_{f_\alpha}(\beta)=q$ for all $\beta\in\Fq$.  If $\alpha$ is a cube, then the subfield values are $-2q$ with multiplicity $q/4$ and $2q$ with multiplicity $3q/4$.  Hence cubic $\alpha$ are already excluded from bentness on the subfield.

The outside regime is subtler.  The paper \cite{Cheng2026} reduced the outside coefficients to a two-variable exponential sum over $\Fq$ and proved a binary Hasse congruence modulo $2q$.  In the noncubic case, this congruence says that every outside coefficient is an odd multiple of $q$; the second Walsh moment then forces the values to be exactly $\pm q$.  In the cubic case, however, the same congruence only gives
\begin{equation}\label{eq:cubic-congruence-intro}
  \W_{f_\alpha}(\beta)\equiv0\pmod{2q},
  \qquad \beta\in\Fqq\setminus\Fq.
\end{equation}
This divisibility does not exclude values such as $\pm4q$.  The purpose of the present paper is to prove the exact outside value set and the exact multiplicities, and hence to complete the Walsh distribution in the cubic case.

\subsection{Auxiliary results and the Kasami completion method}

We shall use four inputs from \cite{Cheng2026}: the explicit subfield spectrum, the outside reduction to a two-variable exponential sum, the Hasse divisibility congruence that implies \eqref{eq:cubic-congruence-intro} for cubic parameters, and the already determined noncubic spectrum.  The last input is used only once, to calibrate the two noncubic coefficient classes in a fourth-moment calculation.  The new part of the present paper is the completion and APN-moment argument described next.

The new ingredient is a structural completion of the outside Fourier sum.  The outside reduction writes $\W_{f_\alpha}(\beta)$ as a Fourier transform on $\Fq^2$ with the line $s=0$ deleted.  Since outside parameters have a nonzero frequency in the second coordinate, adding a function supported on that line does not change the relevant Fourier coefficient.  We therefore fill the missing line by the constant value $1$.  After the coordinate identification $X=t+s\eta$ for a suitable Artin--Schreier
element $\eta\in\Fqq$, the completed function becomes a component of the
classical Kasami monomial \cite{CKM2021}
\[
  X\longmapsto X^K,
  \qquad
  K=2^{2(e+1)}-2^{e+1}+1.
\]

The proof deliberately uses only the almost perfect nonlinear property of this Kasami power, not a full Walsh-spectrum theorem for its components. This distinction is important: the completion fills the missing line in the
outside Fourier transform and thereby connects the outside Walsh sums with
a vectorial APN object; the fourth-moment argument then supplies exactly
the global information that the Hasse congruence alone lacks. The noncubic spectrum from \cite{Cheng2026} calibrates the two noncubic coefficient classes.  The remaining class is the cubic class, and the fourth moment for cubic coefficients follows.  Finally, the subfield spectrum, Walsh orthogonality, and the divisibility \eqref{eq:cubic-congruence-intro} force every outside cubic coefficient to be $0$ or $\pm2q$.

\subsection{Main results}

\begin{theorem}\label{thm:cubic-outside}
Let $q=2^e$ with $e\ge2$ even, let $f_\alpha$ be defined by \eqref{eq:falpha-def}, and assume that $\alpha$ is a cube in $\Fq^*$.  Then
\[
\set{\W_{f_\alpha}(\beta):\beta\in\Fqq\setminus\Fq}=
\begin{cases}
\set{0},&e=2,\\
\set{-2q,0,2q},&e\ge4.
\end{cases}
\]
For $e\ge4$, the values $2q$ and $-2q$ each occur on $\Fqq\setminus\Fq$ with multiplicity $q(q-4)/8$, while $0$ occurs with multiplicity $3q^2/4$.
\end{theorem}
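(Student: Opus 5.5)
The plan is a moment argument. Write every outside coefficient as $\W_{f_\alpha}(\beta)=2q\,m_\beta$ with $m_\beta\in\mathbb Z$; this is possible by the Hasse congruence \eqref{eq:cubic-congruence-intro}. If we can show
\[
\sum_{\beta\notin\Fq} m_\beta^2=\sum_{\beta\notin\Fq} m_\beta^4 ,
\]
then $\sum m_\beta^2(m_\beta^2-1)=0$. Every term of this sum is nonnegative, so each $m_\beta$ lies in $\{0,\pm1\}$. The multiplicities then come from the first and second moments.

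\emph{First and second moments.} By Parseval, $\sum_{\beta\in\Fqq}\W_{f_\alpha}(\beta)^2=q^4$. The subfield spectrum (value $-2q$ with multiplicity $q/4$, value $2q$ with multiplicity $3q/4$) contributes $q\cdot4q^2=4q^3$. Hence $\sum_{\beta\notin\Fq}m_\beta^2=(q^4-4q^3)/(4q^2)=q(q-4)/4$.

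Next, $\sum_\beta \W_{f_\alpha}(\beta)=q^2(-1)^{f_\alpha(0)}=q^2$, since $\sigma(0)=0$. The subfield values contribute $(3q/4-q/4)\cdot 2q=q^2$, so $\sum_{\beta\notin\Fq}m_\beta=0$.

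Once $m_\beta\in\{0,\pm1\}$ is known, let $N_\pm$ be the number of outside $\beta$ with $m_\beta=\pm1$. The two moments give $N_++N_-=q(q-4)/4$ and $N_+=N_-$. Therefore $N_\pm=q(q-4)/8$, and the zero multiplicity is $q^2-q-q(q-4)/4=3q^2/4$. For $e=2$ we have $q(q-4)=0$, so every outside value is $0$. This yields both cases of Theorem~\ref{thm:cubic-outside}.

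\emph{Fourth moment.} The remaining task is $\sum_{\beta\notin\Fq}\W_{f_\alpha}(\beta)^4=16q^4\cdot q(q-4)/4=4q^5(q-4)$. I would do this in four steps.

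First, use the outside reduction to write $\W_{f_\alpha}(\beta)$ as a Fourier coefficient, at a frequency with nonzero second coordinate, of a function on $\Fq^2$ whose line $s=0$ is deleted. Fill that line with the constant $1$. This leaves all such coefficients unchanged.

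Second, identify the completed function, via $X=t+s\eta$, with a component $\Tr_{q^2}(\gamma X^K)$ of the Kasami power $X^K$, where $K=2^{2k}-2^k+1$ and $k=e+1$. Since $\gcd(e+1,2e)=1$ for even $e$, this power is APN on $\Fqq$. The APN property gives the standard identity
\[
\sum_{\gamma\ne0}\sum_{a}\W_{\gamma}(a)^4=2^{3n+1}(2^n-1),\qquad n=2e .
\]

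Third, track how $(\alpha,\beta)$ maps to $(\gamma,a)$. Group the nonzero components into the three cubic-residue classes of the corresponding $\alpha$, using the multiplicative symmetry $\alpha\mapsto\alpha c^3$, which should permute Walsh values. Subtract two kinds of terms:
\begin{itemize}
\item the frequencies with vanishing second coordinate, where the filled line contributes and which must be computed directly (these should reduce to the subfield data plus an explicit line sum);
\item the two noncubic classes, whose outside values are $\pm q$ by the bentness result, so their fourth moments are known exactly.
\end{itemize}

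Fourth, what remains is the total cubic fourth moment. Dividing by the class size gives the fourth moment for a single cubic $\alpha$.

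\emph{Main obstacle.} I expect the bookkeeping of the third step to be the hardest part. One must make the correspondence between parameters $(\alpha,\beta)$ and Kasami coordinates $(\gamma,a)$ exact, including normalizing constants and a possible factor of $q$ from the completion. One must also evaluate the spurious contributions of the frequencies with second coordinate zero. I would first try to show that these reduce to the known subfield spectrum together with an elementary sum over the filled line. I would then check the resulting constant against the target $4q^5(q-4)$ in the case $e=2$, where the answer must be $0$.
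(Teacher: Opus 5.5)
Your proposal follows the paper's proof essentially step for step. The ingredients are the Hasse divisibility by $2q$, the first and second moments from orthogonality and the subfield spectrum, and the outside fourth moment $4q^5(q-4)$. That moment comes from completing the punctured transform on the line $s=0$, identifying the result with components of the APN Kasami power $X^{4q^2-2q+1}$ (with no extra factor of $q$), and calibrating the two noncubic classes by bentness.

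The bookkeeping you flag is settled in the paper in two ways. First, at frequencies with vanishing second coordinate one gets exactly $S_\alpha(a)=\W_{f_\alpha}(a)$: the filled line contributes $q$, and the two-to-one map $x\mapsto A_x$ together with the permutation $\Psi$ of $\mathcal T$ does the rest. So the full multisets coincide and nothing needs to be subtracted. Second, the class reduction uses $\gcd(K,q^2-1)=3$ and the bijection $\Fq^*/(\Fq^*)^3\to\Fqq^*/(\Fqq^*)^3$, so every component class in $\Fqq^*$ has a representative $\alpha\in\Fq^*$ of the same cube type.
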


\begin{remark}\label{rem:relation-to-Cheng}
The outside spectrum for noncubic $\alpha$ was already determined in \cite[Theorem~1.3 and Corollary~1.4]{Cheng2026}: the outside value set is $\{\pm q\}$, with the corresponding multiplicities.  In the present paper this theorem is used only to calibrate the two noncubic classes in the fourth-moment argument.
\end{remark}

Combining Theorem~\ref{thm:cubic-outside} with the known subfield spectrum gives the full cubic distribution.  We use the standard convention that a Boolean function on $\F_{2^n}$ is $s$-plateaued if its Walsh coefficients belong to $\{0,\pm 2^{(n+s)/2}\}$; in the present setting $n=2e$, so $2$-plateaued means that the only possible nonzero Walsh values have absolute value $2q$.

\begin{corollary}\label{cor:full-cubic}
If $\alpha$ is a cube in $\Fq^*$, then $f_\alpha$ is $2$-plateaued on $\Fqq$.  Its full Walsh distribution is
\[
  2q:\frac{q(q+2)}8,
  \qquad
  -2q:\frac{q(q-2)}8,
  \qquad
  0:\frac{3q^2}{4}.
\]
Consequently, its nonlinearity is $q^2/2-q$.
\end{corollary}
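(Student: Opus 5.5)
The plan is to read off the corollary from Theorem~\ref{thm:cubic-outside} together with the explicit subfield spectrum from \cite{Cheng2026} recalled in the introduction. The parameter set splits as $\Fqq=\Fq\sqcup(\Fqq\setminus\Fq)$. On the subfield part, for cubic $\alpha$, the value $-2q$ occurs $q/4$ times and $2q$ occurs $3q/4$ times. On the outside part, Theorem~\ref{thm:cubic-outside} shows that every value lies in $\{0,\pm2q\}$. Together these show that all Walsh values of $f_\alpha$ lie in $\{0,\pm2q\}=\{0,\pm2^{(2e+2)/2}\}$, which is the $2$-plateaued property with $n=2e$ and $s=2$.

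Next I add the multiplicities. For $e\ge4$, the value $2q$ occurs
\[
\frac{3q}{4}+\frac{q(q-4)}{8}=\frac{q^2+2q}{8}=\frac{q(q+2)}{8}
\]
times, and $-2q$ occurs
\[
\frac q4+\frac{q(q-4)}8=\frac{q(q-2)}8
\]
times. The value $0$ never occurs on $\Fq$, so its multiplicity is the outside count $3q^2/4$.

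The case $e=2$, i.e.\ $q=4$, needs a separate check, because Theorem~\ref{thm:cubic-outside} then gives outside value set $\{0\}$ with all $q^2-q=12$ outside values equal to $0$. The general formulas still hold here: $q(q+2)/8=3=3q/4$, $q(q-2)/8=1=q/4$, and $3q^2/4=12$. So the stated distribution is valid for all even $e\ge2$. As a sanity check, the total count is $q(q+2)/8+q(q-2)/8+3q^2/4=q^2$, and Parseval holds: $4q^2\cdot q^2/4=q^4$.

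Finally, the nonlinearity follows from the standard formula
\[
\mathrm{NL}(f)=2^{n-1}-\tfrac12\max_\beta\abs{\W_f(\beta)}.
\]
Here $\max_\beta\abs{\W_{f_\alpha}(\beta)}=2q$, which is attained because $2q$ occurs on the subfield. This gives $\mathrm{NL}(f_\alpha)=q^2/2-q$.

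There is no real obstacle in this argument. All the substantive work sits in Theorem~\ref{thm:cubic-outside}, and the only care needed here is the small-field case $e=2$.
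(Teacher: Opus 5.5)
Your proposal is correct and follows the paper's proof. Like the paper, you add the subfield multiplicities to the outside multiplicities from Theorem~\ref{thm:cubic-outside} and treat $q=4$ separately. Your explicit derivation of the nonlinearity from $2^{n-1}-\tfrac12\max_\beta\abs{\W_{f_\alpha}(\beta)}$ fills in a step the paper leaves implicit.
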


The paper is organized as follows. Section~2 records the APN result and the fourth-moment formalism.  Section~3 proves the completion identity and compares the completed Kasami components with the Walsh coefficients of $f_\alpha$.  Section~4 derives the cubic value set and multiplicities from the fourth moment, the subfield spectrum, and the Hasse divisibility.  Section~5 concludes the paper.

\section{Kasami APN input and fourth moments}

\subsection{APN and the fourth moment}

Let $L=\F_{2^n}$ and let $F:L\to L$.  Following the standard terminology of differentially uniform mappings and APN functions \cite{Nyberg1994,CKM2021}, we say that $F$ is APN if, for every $u\in L^*$ and every $v\in L$, the equation
\[
  F(X+u)+F(X)=v
\]
has at most two solutions in $L$.  Since the solutions occur in pairs $X,X+u$ in characteristic two, this is equivalent to saying that the equation has either zero or two solutions.

\begin{proposition}\label{prop:Kasami-APN}
Let $n$ be even, let $\gcd(r,n)=1$, and put
\[
K_{n,r}=2^{2r}-2^r+1.
\]
Then $X\mapsto X^{K_{n,r}}$ is APN on $\F_{2^n}$.
\end{proposition}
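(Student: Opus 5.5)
Proposition~\ref{prop:Kasami-APN} is the classical Kasami APN theorem. The plan is to cite it from the literature \cite{CKM2021} rather than reprove it. Still, here is the route I would follow for a self-contained proof. The idea is to reduce the derivative equation to counting roots of a linearized polynomial.

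Write $k=2^r$ and $F(X)=X^{K}$ with $K=k^2-k+1$. Fix $u\in L^*$. Substituting $X=uY$ gives $F(X+u)+F(X)=u^K\bigl((Y+1)^K+Y^K\bigr)$. So it suffices to treat $u=1$: we must show that $D(Y)=(Y+1)^K+Y^K$ is two-to-one onto its image.

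The standard device is to use the identity $(k+1)K=k^3+1$. Since $n$ is even and $\gcd(r,n)=1$, $r$ is odd. Hence $\gcd(k^3+1,2^n-1)=\gcd(k+1,2^n-1)=3$. This means $X\mapsto X^{k+1}$ is not a bijection, so the classical proof instead works with the Dillon--Dobbertin formulation. There one shows that the set of $Y$ with $D(Y)=D(Z)$ is controlled by the roots of the linearized polynomial
\[
P(T)=T^{k^2}+T^{k}+T,
\]
or equivalently of $T^{k}+T+c$ after the Kasami substitution $Y\mapsto 1/(Y^{k}+Y)$ on the relevant domain. The roots of $T^{2^r}+T$ in $L$ form $\F_{2^{\gcd(r,n)}}=\F_2$. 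Linearized polynomials of this shape therefore have kernel of dimension at most $1$ over $\F_2$. That kernel bound gives at most two solutions.

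Concretely, I would carry out four steps. First, reduce to $u=1$. Second, show that $D(Y)=v$ is equivalent, for $Y\notin\F_4$ (or away from a small exceptional set), to an equation $A(Y)=w$, where $A$ is affine over $\F_2$ with kernel $\{0,1\}$ coming from $\ker(T^{2^r}+T)=\F_2$. Third, handle the exceptional points directly. Fourth, conclude that at most two solutions exist.

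The main obstacle is the second step. The Kasami exponent is not quadratic, so the derivative is not affine, and the classical argument (Kasami 1971; Janwa--Wilson; Dillon--Dobbertin) needs a nontrivial rational change of variables. Since this is precisely the content of \cite{CKM2021}, the proof in the paper should simply invoke that reference, noting that $n$ even with $\gcd(r,n)=1$ lies within its hypotheses.
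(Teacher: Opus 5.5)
You are right that this is the Kasami APN theorem and that citing \cite{CKM2021} is legitimate. The paper's proof also rests on that reference, and your reduction to $u=1$ matches its scaling step. The paper does not stop at the citation, however. It isolates the one fact that does all the work, and that fact is exactly what your step two is missing.

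With $T_r(Z)=\sum_{i=0}^{r-1}Z^{2^i}$, the M\"uller--Cohen--Matthews polynomial $f_{r,2^r+1}(Z)=T_r(Z)^{2^r+1}/Z^{2^r}$ permutes $\F_{2^n}$ by \cite[Lem.~4]{CKM2021}; this is where the hypotheses on $r$ and $n$ enter. Since $T_r(Y^2+Y)=Y^{2^r}+Y$ by telescoping, one has the identity
\[
(Y+1)^K+Y^K+1=f_{r,2^r+1}(Y^2+Y)=\frac{(Y^{2^r}+Y)^{2^r+1}}{(Y^2+Y)^{2^r}}.
\]
Hence $D(Y)=v$ holds if and only if $Y^2+Y=f_{r,2^r+1}^{-1}(v+1)$, and this equation has $0$ or $2$ solutions. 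This realizes your step two globally with $A(Y)=Y^2+Y$. There is no exceptional set, so your step three is vacuous. Note that the kernel $\{0,1\}$ comes from $Y^2+Y$, not from $\ker(T^{2^r}+T)$.

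The mechanism you propose instead does not work. The fibers of $D$ are not controlled by $P(T)=T^{k^2}+T^k+T$. From $P(T)^k+P(T)=T^{k^3}+T$ one gets
\[
\ker P\subseteq\F_{2^{\gcd(3r,n)}}=\F_{2^{\gcd(3,n)}}.
\]
So $\ker P=\{0\}$ when $3\nmid n$. When $3\mid n$, $P$ restricts to $\Tr_{\F_8/\F_2}$ on $\F_8$, so $\ker P$ has four elements. Thus $\ker P$ never has exactly two elements, and your claim that it has $\F_2$-dimension at most $1$ fails whenever $3\mid n$. Similarly, $\gcd(k^3+1,2^n-1)=9$, not $3$, when $3\mid n$; only $\gcd(k+1,2^n-1)=3$ is correct.

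These cases matter here: the paper applies the proposition with $n=2e$ and $r=e+1$, and $e=6$ gives $n=12$. As written, your sketch is a citation plus a heuristic whose key step is unproved and whose linear-algebra mechanism is incorrect. The genuinely hard input is the permutation property of $f_{r,2^r+1}$, together with the identity above. You should name and cite these explicitly rather than point to kernels of linearized polynomials.
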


\begin{proof}
This is the even-dimensional Kasami APN argument of Carlet, Kim, and Mesnager \cite[Sec.~3.2, Lem.~4 and the paragraph following it]{CKM2021}.  We recall the deduction to make the precise form used here explicit.  Let
\[
  F(X)=X^{2^{2r}-2^r+1}.
\]
Since $n$ is even and $\gcd(r,n)=1$, the integer $r$ is odd.  Let
\[
  T_r(Z)=\sum_{i=0}^{r-1}Z^{2^i},
  \qquad
  f_{r,2^r+1}(Z)=\frac{T_r(Z)^{2^r+1}}{Z^{2^r}}
\]
be the M\"uller--Cohen--Matthews polynomial used in \cite[Sec.~3.2]{CKM2021}.  Since $T_r(Z)$ is divisible by $Z$, the displayed quotient represents a polynomial map; its value at $Z=0$ is understood through this polynomial representative.  By \cite[Lem.~4]{CKM2021}, this polynomial is a permutation of $\F_{2^n}$.  The paragraph following that lemma gives the identity
\[
  F(X)+F(X+1)+1=f_{r,2^r+1}(X+X^2).
\]
The linearized map $X\mapsto X+X^2$ has kernel $\F_2$, and hence every fiber has size either zero or two.  Composing with the permutation $f_{r,2^r+1}$ and adding the constant $1$ do not increase fiber sizes.  Thus the derivative $X\mapsto F(X+1)+F(X)$ has at most two solutions for every right-hand side.  For arbitrary $u\in\F_{2^n}^*$,
\[
  F(X+u)+F(X)
  =u^{K_{n,r}}\left(F\left(\frac Xu+1\right)+F\left(\frac Xu\right)\right).
\]
Therefore every nonzero derivative of $F$ has fibers of size at most two.  Hence $F$ is APN.
\end{proof}

For $A,B\in L$, define
\[
  S_A(B)=\sum_{X\in L}(-1)^{\Tr_L(AF(X)+BX)},
\]
where $\Tr_L$ is the absolute trace from $L$ to $\Ftwo$.

\begin{lemma}\label{lem:APN-fourth-moment}
Let $F:L\to L$ be APN and put $N=\abs L$.  Then
\[
  \sum_{A\in L^*,\,B\in L}S_A(B)^4=2N^3(N-1).
\]
\end{lemma}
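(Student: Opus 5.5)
The plan is to expand the fourth power, evaluate the sum over all $A\in L$ (including $A=0$) by counting solutions of a system of equations, and then subtract the $A=0$ contribution.

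\emph{Expansion.} Expanding $S_A(B)^4$ as a quadruple sum over $X_1,\dots,X_4\in L$ and summing over $B\in L$ and all $A\in L$, character orthogonality gives
\[
\sum_{A\in L}\sum_{B\in L}S_A(B)^4 = N^2\cdot \#\set{(X_1,X_2,X_3,X_4)\in L^4 : X_1+X_2+X_3+X_4=0,\ F(X_1)+F(X_2)+F(X_3)+F(X_4)=0}.
\]

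\emph{Counting the quadruples.} I would parametrize the solutions by $X_1=X$, $X_2=X+u$, $X_3=Y$, $X_4=Y+u$, with $u=X_1+X_2$; the first equation forces $X_3+X_4=u$. The second equation then reads $F(X+u)+F(X)=F(Y+u)+F(Y)$.

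If $u=0$, the equation holds identically, which gives $N^2$ quadruples.

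If $u\neq0$, then by the APN property each value of the derivative $D_uF$ has a fiber of size $0$ or $2$. Hence for each $X$ there are exactly two admissible $Y$, namely $Y\in\set{X,X+u}$. This gives $2N$ quadruples for each $u$, so $2N(N-1)$ in total.

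Altogether the count is $N^2+2N(N-1)=3N^2-2N$.

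\emph{Removing $A=0$.} For $A=0$ we have $S_0(B)=N$ if $B=0$ and $S_0(B)=0$ otherwise, so this term contributes $N^4$. Therefore
\[
\sum_{A\in L^*,\,B\in L}S_A(B)^4 = N^2(3N^2-2N)-N^4 = 2N^4-2N^3 = 2N^3(N-1),
\]
which is the claimed identity.

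The only delicate point is the count for $u\ne0$. It uses the equivalent form of APN noted before Proposition~\ref{prop:Kasami-APN}: fibers have size exactly $0$ or $2$. Since the fiber of $D_uF$ containing $X$ is nonempty and contains both $X$ and $X+u$, it is exactly $\set{X,X+u}$.
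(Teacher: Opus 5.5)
Your proposal is correct and follows essentially the same route as the paper: expand the fourth power, use character orthogonality to reduce to counting the quadruples with $X_1+X_2+X_3+X_4=0$ and $\sum_i F(X_i)=0$, which gives $N^2+2N(N-1)=3N^2-2N$ by the APN property, and then subtract the $A=0$ contribution $N^4$. Your parametrization $(X,X+u,Y,Y+u)$ and the argument that the fiber of $D_uF$ is exactly $\set{X,X+u}$ match the paper's counting step.
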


\begin{proof}
First sum over all $A\in L$ and $B\in L$.  Let
\[
  \psi(z)=(-1)^{\Tr_L(z)}
\]
be the canonical additive character of $L$.  Then
\[
  S_A(B)=\sum_{X\in L}\psi(AF(X)+BX).
\]
Writing $\mathbf X=(X_1,X_2,X_3,X_4)$, expanding the fourth power gives
\[
\begin{aligned}
\sum_{A,B\in L}S_A(B)^4
&=\sum_{A,B\in L}
  \sum_{\mathbf X\in L^4}
  \psi\left(
      A\sum_{i=1}^4 F(X_i)
      +B\sum_{i=1}^4 X_i
  \right)  \\
&=\sum_{\mathbf X\in L^4}
  \left(
    \sum_{A\in L}
    \psi\left(A\sum_{i=1}^4F(X_i)\right)
  \right)
  \left(
    \sum_{B\in L}
    \psi\left(B\sum_{i=1}^4X_i\right)
  \right).
\end{aligned}
\]
By the orthogonality of additive characters,
\[
  \sum_{A\in L}\psi(AU)=
  \begin{cases}
  N,& U=0,\\
  0,& U\ne0,
  \end{cases}
\]
and the same formula holds for the sum over $B$.  Hence a quadruple
$(X_1,X_2,X_3,X_4)\in L^4$ contributes $N^2$ precisely when
\[
  \sum_{i=1}^4X_i=0
  \qquad\text{and}\qquad
  \sum_{i=1}^4F(X_i)=0,
\]
and contributes $0$ otherwise.  Since the characteristic is two, these two
conditions are exactly
\[
  X_1+X_2+X_3+X_4=0,
  \qquad
  F(X_1)+F(X_2)+F(X_3)+F(X_4)=0.
\]
If $T$ denotes the number of ordered quadruples satisfying these two
conditions, then
\[
  \sum_{A,B\in L}S_A(B)^4=N^2T.
\]
We now count $T$.  Put $u=X_1+X_2=X_3+X_4$.  If $u=0$, then $X_2=X_1$ and $X_4=X_3$, giving $N^2$ quadruples.  If $u\ne0$, then for each choice of $u$ and $X_1$, the derivative value $F(X_1+u)+F(X_1)$ is fixed.  The equation $F(X_3+u)+F(X_3)=F(X_1+u)+F(X_1)$ has the two solutions $X_3=X_1$ and $X_3=X_1+u$, and the APN property gives no further solutions.  Thus exactly two values of $X_3$ are possible, and then $X_2$ and $X_4$ are determined.  Hence
\[
  T=N^2+2N(N-1)=3N^2-2N.
\]
Thus
\[
  \sum_{A,B\in L}S_A(B)^4=N^2(3N^2-2N).
\]
The contribution of $A=0$ is $N^4$, since $S_0(0)=N$ and $S_0(B)=0$ for $B\ne0$.  Subtracting this contribution gives the claimed identity.
\end{proof}

\subsection{Cube classes for the present Kasami exponent}

From now on in this section let $L=\Fqq$, so $\abs L=q^2$, and put
\[
  K=2^{2(e+1)}-2^{e+1}+1=4q^2-2q+1.
\]
This is the exponent $K_{2e,e+1}$, and $\gcd(e+1,2e)=1$.

\begin{lemma}\label{lem:gcd-K}
One has $\gcd(K,q^2-1)=3$.  Consequently, the image of the map $X\mapsto X^K$ on $\Fqq^*$ is the subgroup $(\Fqq^*)^3$ of cubes.
\end{lemma}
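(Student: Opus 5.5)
We compute $\gcd(K,q^2-1)$ by reducing $K=4q^2-2q+1$ modulo $q^2-1$. Since $q^2\equiv1\pmod{q^2-1}$, we get
\[
K\equiv 4-2q+1=5-2q\pmod{q^2-1},
\]
so $\gcd(K,q^2-1)=\gcd(2q-5,q^2-1)$. Let $g=\gcd(2q-5,q^2-1)$. Since $2q-5$ is odd, $g$ is odd and hence coprime to $4$. From $4(q^2-1)=(2q-5)(2q+5)+21$ we see that $g$ divides $21$.

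Next we pin down $g$ using $q=2^e$ with $e$ even, i.e.\ $q=4^{e/2}$.
\begin{itemize}
\item[(i)] Modulo $3$: $q\equiv1$, so $2q-5\equiv-3\equiv0$ and $q^2-1\equiv0$. Hence $3\mid g$.
\item[(ii)] Modulo $7$: powers of $2$ are $1,2,4$, so $q\in\{1,2,4\}$ mod $7$. For $7\mid 2q-5$ we need $q\equiv6\pmod 7$, which never happens. Hence $7\nmid g$.
\end{itemize}
Therefore $g=3$.

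For the image statement, $\Fqq^*$ is cyclic of order $q^2-1$. The image of $X\mapsto X^K$ is therefore the subgroup of index $\gcd(K,q^2-1)=3$. In a cyclic group there is a unique subgroup of each index dividing the order, and here it is the group of cubes $(\Fqq^*)^3$. Explicitly, writing $X=\gamma^j$ for a generator $\gamma$, the image is $\langle\gamma^{K}\rangle=\langle\gamma^{\gcd(K,q^2-1)}\rangle=\langle\gamma^3\rangle$.

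There is no real obstacle here. The only point needing care is the modulo $7$ check, which relies on the residues of powers of $2$ modulo $7$.
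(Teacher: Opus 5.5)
Your proof is correct, but it follows a different route from the paper.

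The paper writes $K=a^2-a+1$ with $a=2^{e+1}$ and argues prime by prime. A prime $\ell\ne3$ dividing both $K$ and $q^2-1$ would force $a$ to have multiplicative order $6$ modulo $\ell$. Because $\gcd(e+1,2e)=1$, the order of $2$ modulo $\ell$ would then also be $6$, so $\ell\mid 63$, i.e.\ $\ell=7$. This fails since $2$ has order $3$ modulo $7$. A separate computation modulo $9$ (using $2^{e+1}\equiv 2,5,8\pmod 9$ for odd $e+1$) then shows $9\nmid K$.

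You instead use the reduction $K\equiv 5-2q\pmod{q^2-1}$, which the paper itself only invokes later, in the completion lemma. Combined with the single identity $4(q^2-1)=(2q-5)(2q+5)+21$, this confines the gcd to the divisors of $21$ at once. The question of a factor $9$ never arises, and only two residue checks remain.

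Your argument is shorter and purely elementary, and it also writes out the image statement that the paper leaves as ``consequently.'' The paper's argument, in exchange, uses only that the field degree $n$ is even and $\gcd(r,n)=1$. So it proves $\gcd(2^{2r}-2^r+1,2^n-1)=3$ for every even-degree Kasami exponent $K_{n,r}$. Yours relies on the special choice $n=2e$, $r=e+1$, for which $2^{2r}=4q^2\equiv4$ and $K$ collapses to a linear expression in $q$.
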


\begin{proof}
Since $e$ is even, $e+1$ is odd.  Thus $2^{e+1}\equiv-1\pmod3$, and so $3\mid K$.  Also $3\mid q^2-1$.

Let $\ell$ be a prime divisor of $\gcd(K,q^2-1)$.  Then $2^{2e}\equiv 1\pmod \ell$, and putting
$a=2^{e+1}$,
we have $a^2-a+1\equiv0\pmod \ell$. Suppose first that $\ell\ne3$.  Then $a\not\equiv -1\pmod\ell$, since
$a\equiv-1\pmod\ell$ would give
\[
  a^2-a+1\equiv 1+1+1\equiv3\pmod\ell,
\]
forcing $\ell=3$. Multiplying
$a^2-a+1\equiv0\pmod\ell$ by $a+1$ gives
$a^3+1\equiv0\pmod\ell$,
and hence
\[
a^3\equiv -1\pmod\ell,\qquad a^6\equiv1\pmod\ell.
\]
Since $a\not\equiv\pm1\pmod\ell$, $a$ has order exactly
$6$ modulo $\ell$.

Let $m=\operatorname{ord}_\ell(2)$ be the order of $2$ modulo $\ell$. From $2^{2e}\equiv1\pmod\ell$,
we have $m\mid 2e$.  Since $\gcd(e+1,2e)=1$, it follows that
$\gcd(m,e+1)=1$.  Therefore
\[  \operatorname{ord}_\ell(a)=\operatorname{ord}_\ell(2^{e+1})
  =\frac{m}{\gcd(m,e+1)}
  =m,
\]
which gives $m=6$. Hence $\ell$
divides $2^6-1=63$ and $2$ has order $6$ modulo $\ell$.  The only prime
divisor of $63$ different from $3$ is $7$, but $2$ has order $3$ modulo
$7$. This is impossible. Thus the only possible prime divisor of
$\gcd(K,q^2-1)$ is $3$.

It remains to exclude a factor $9$.  Since $e+1$ is odd, $2^{e+1}\equiv2,5,$ or $8\pmod9$, and in all three cases
\[
  (2^{e+1})^2-2^{e+1}+1\equiv3\pmod9.
\]
Thus $9\nmid K$.  Therefore $\gcd(K,q^2-1)=3$.
\end{proof}

\begin{lemma}\label{lem:cube-class-subfield}
For $A\in\Fq^*$, the element $A$ is a cube in $\Fq^*$ if and only if it is a cube in $\Fqq^*$.
\end{lemma}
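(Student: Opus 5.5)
The plan is to compare the two cube tests through the cyclic group structure. Since $\Fq^*$ is cyclic of order $q-1$ and $q\equiv1\pmod3$, an element $A\in\Fq^*$ is a cube in $\Fq^*$ if and only if $A^{(q-1)/3}=1$. Likewise $\Fqq^*$ is cyclic of order $q^2-1$, so $A$ is a cube in $\Fqq^*$ if and only if $A^{(q^2-1)/3}=1$. The whole argument consists in relating these two power maps on the subfield.

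For the forward direction I would simply observe that a cube root lying in $\Fq^*$ also lies in $\Fqq^*$. For the converse, I would write $(q^2-1)/3=(q+1)\cdot(q-1)/3$. For $A\in\Fq^*$ this gives
\[
A^{(q^2-1)/3}=\bigl(A^{(q-1)/3}\bigr)^{q+1}.
\]
Put $\zeta=A^{(q-1)/3}$. It satisfies $\zeta^3=A^{q-1}=1$, so $\zeta$ is a cube root of unity. The key arithmetic fact is $q+1\equiv2\pmod 3$, which holds because $q=2^e$ with $e$ even gives $q\equiv1\pmod3$. Hence $\zeta^{q+1}=\zeta^2$, and this equals $1$ exactly when $\zeta=1$. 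Therefore $A^{(q^2-1)/3}=1$ holds if and only if $A^{(q-1)/3}=1$, which is the claimed equivalence.

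An alternative argument uses the norm map $N:\Fqq^*\to\Fq^*$, $x\mapsto x^{q+1}$, which is surjective. If $A=x^3$ with $x\in\Fqq^*$, then $A^{q+1}=A^2$ because $A\in\Fq$. This gives $A^2=N(x)^3$, so $A^2$ is a cube in $\Fq^*$. Then $A=A^4/A^3=(A^2)^2/A^3$ is also a cube in $\Fq^*$. Either route works, and neither step presents a real obstacle. The only point requiring care is the congruence $q+1\not\equiv0\pmod3$, which holds because $e$ is even.
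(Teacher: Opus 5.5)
Your proposal is correct and follows the paper's proof essentially step for step: both use the criterion $A^{|G|/3}=1$ in the cyclic groups $\Fq^*$ and $\Fqq^*$, the factorization $(q^2-1)/3=(q+1)(q-1)/3$, and the reduction $\zeta^{q+1}=\zeta^2$ from $q+1\equiv2\pmod 3$. Your alternative norm argument is also valid. It needs neither surjectivity of the norm nor $q\equiv1\pmod3$, only that the degree $2$ of $\Fqq/\Fq$ is coprime to $3$.
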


\begin{proof}
Since $e$ is even, we have $q=2^e\equiv1\pmod 3$.  Hence $3\mid q-1$ and
also $3\mid q^2-1$.  Recall that, in a finite cyclic group $G$ whose order is
divisible by $3$, an element $g\in G$ is a cube if and only if $g^{|G|/3}=1$.
Indeed, if $G=\langle \omega\rangle$ has order $m$ and $g=\omega^j$, then
$g$ is a cube if and only if $j\equiv0\pmod{\gcd(3,m)}$, that is, if and only
if $j\equiv0\pmod3$; this is equivalent to
$g^{m/3}=\omega^{jm/3}=1$.

Applying this criterion first in $\Fq^*$ and then in $\Fqq^*$, we have
\[
  A\in(\Fq^*)^3
  \quad\Longleftrightarrow\quad
  A^{(q-1)/3}=1,
\]
and
\[
  A\in(\Fqq^*)^3
  \quad\Longleftrightarrow\quad
  A^{(q^2-1)/3}=1.
\]
Since $A\in\Fq^*$, we may write
\[
  A^{(q^2-1)/3}
  =A^{(q-1)(q+1)/3}
  =\left(A^{(q-1)/3}\right)^{q+1}.
\]
Put
$\zeta=A^{(q-1)/3}$.
Then $\zeta^3=A^{q-1}=1$, so $\zeta$ is a third root of unity.  In
characteristic two the group of third roots of unity is $\F_4^*$, and hence
\[
  \zeta\in\F_4^*=\{1,\omega,\omega^2\}.
\]
On this group, raising to the power $q+1$ is the same as raising to the
power $2$, because $q\equiv1\pmod3$ and therefore $q+1\equiv2\pmod3$.
Thus
\[
  A^{(q^2-1)/3}=\zeta^{q+1}=\zeta^2.
\]
Since $\F_4^*$ has odd order $3$, the equality $\zeta^2=1$ holds if and only
if $\zeta=1$.  Consequently
\[
  A^{(q^2-1)/3}=1
  \quad\Longleftrightarrow\quad
  \zeta=1
  \quad\Longleftrightarrow\quad
  A^{(q-1)/3}=1.
\]
By the two cube criteria above, this proves that $A$ is a cube in $\Fq^*$
if and only if it is a cube in $\Fqq^*$.
\end{proof}
\begin{lemma}\label{lem:class-moment}
For $A\in\Fqq^*$, the fourth moment
\[
  M_A=\sum_{B\in\Fqq}S_A(B)^4,
  \qquad
  S_A(B)=\sum_{X\in\Fqq}(-1)^{\Tr_{q^2}(AX^K+BX)},
\]
depends only on the cubic class of $A$ in $\Fqq^*/(\Fqq^*)^3$.
\end{lemma}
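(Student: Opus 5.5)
The idea is to show that $M_A$ does not change when $A$ is multiplied by a cube. Suppose $A'=Ac^3$ with $c\in\Fqq^*$. By Lemma~\ref{lem:gcd-K}, $X\mapsto X^K$ maps $\Fqq^*$ onto $(\Fqq^*)^3$, so there is $\lambda\in\Fqq^*$ with $\lambda^K=c^3$. Hence $A'=A\lambda^K$. The rest of the argument substitutes $X=\lambda^{-1}Y$ in $S_{A'}(B)$ and checks that the sum over $B$ is unchanged.

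First, the substitution itself. Since $Y\mapsto\lambda^{-1}Y$ is a bijection of $\Fqq$,
\[
S_{A'}(B)=\sum_{X\in\Fqq}(-1)^{\Tr_{q^2}(A\lambda^KX^K+BX)}
=\sum_{Y\in\Fqq}(-1)^{\Tr_{q^2}(AY^K+B\lambda^{-1}Y)}
=S_A(\lambda^{-1}B).
\]
Here we used $(\lambda^{-1}Y)^K\lambda^K=Y^K$.

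Second, the sum over frequencies. As $B$ runs over $\Fqq$, so does $\lambda^{-1}B$. Therefore
\[
M_{A'}=\sum_{B}S_A(\lambda^{-1}B)^4=\sum_{B'}S_A(B')^4=M_A.
\]
So $M_A$ is constant on each coset of $(\Fqq^*)^3$ in $\Fqq^*$, which is the claim.

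The only step that needs care is producing $\lambda$ with $\lambda^K=c^3$, and this is exactly what the image statement of Lemma~\ref{lem:gcd-K} supplies. Concretely, since $\gcd(K,q^2-1)=3$, the $K$-th power map on $\Fqq^*$ has kernel of order $3$. Its image therefore has index $3$, and $(\Fqq^*)^3$ is the unique subgroup of index $3$ in the cyclic group $\Fqq^*$, so the image equals $(\Fqq^*)^3$. No APN input is needed for this lemma.
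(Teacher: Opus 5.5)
Your proof is correct and matches the paper's argument. Like the paper, you use Lemma~\ref{lem:gcd-K} to write $A'=A\lambda^K$, substitute $Y=\lambda X$ to get $S_{A'}(B)=S_A(\lambda^{-1}B)$, and reindex the sum over $B$; your extra kernel/index justification of the image statement is just an unpacking of that lemma's ``consequently'' clause.
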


\begin{proof}
If $A'=A\lambda^K$ with $\lambda\in\Fqq^*$, then the change of variable $Y=\lambda X$ gives
\[
  S_{A'}(B)=S_A(B\lambda^{-1}).
\]
By Lemma~\ref{lem:gcd-K}, the set of $K$-th powers in $\Fqq^*$ is exactly the set of cubes.  Therefore $A$ and $A'$ in the same cubic class have the same multiset of transform values, and hence the same fourth moment.
\end{proof}

\section{Completion of the outside Walsh sums}

\subsection{Reductions from the permutation-inverse family}

We first record the two facts from \cite{Cheng2026} used below.

\begin{proposition}[{\cite[Thm.~1.2]{Cheng2026}}]\label{prop:subfield-spectrum}
Let $\alpha\in\Fq^*$.
\begin{enumerate}
\item If $\alpha$ is not a cube in $\Fq$, then $\W_{f_\alpha}(\beta)=q$ for every $\beta\in\Fq$.
\item If $\alpha$ is a cube in $\Fq$, then, on $\Fq$, the value $-2q$ occurs with multiplicity $q/4$, and the value $2q$ occurs with multiplicity $3q/4$.
\end{enumerate}
\end{proposition}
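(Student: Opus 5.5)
Throughout, fix $\beta\in\Fq$ and change variables $x=\sigma(y)$; this is legitimate because $\sigma$ permutes $\Fqq$. Then
\[
\W_{f_\alpha}(\beta)=\sum_{y\in\Fqq}(-1)^{\Tr_{q^2}(\alpha y^3+\beta y+\beta y^d+\beta y^{dq})}.
\]
The key observation is that for $\beta\in\Fq$,
\[
\Tr_{q^2}(\beta y^{dq})=\Tr_{q^2}\bigl((\beta y^d)^q\bigr)=\Tr_{q^2}(\beta y^d),
\]
so these two terms cancel in characteristic two. Hence
\[
\W_{f_\alpha}(\beta)=\sum_{y\in\Fqq}(-1)^{Q_\beta(y)},\qquad Q_\beta(y)=\Tr_{q^2}(\alpha y^3+\beta y).
\]
This is the Walsh transform of a quadratic (Gold-type) Boolean function. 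The plan is therefore to use the standard theory of quadratic forms: $|\W_{f_\alpha}(\beta)|^2$ equals either $q^2\cdot|R|$ or $0$, where $R$ is the radical of the associated bilinear form.

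The first step is to compute $R$. The form is
\[
B(y,z)=\Tr_{q^2}\bigl(\alpha(y^2z+yz^2)\bigr)=\Tr_{q^2}\bigl(z(\alpha y^2+(\alpha y)^{2^{-1}})\bigr),
\]
so $y\in R$ if and only if $\alpha^2y^4=\alpha y$. Equivalently, $y=0$ or $y^3=\alpha^{-1}$.

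1. \emph{Noncubic case.} By Lemma~\ref{lem:cube-class-subfield}, if $\alpha$ is not a cube in $\Fq$ then $\alpha^{-1}$ is not a cube in $\Fqq$. Hence $R=\{0\}$, and $Q_\beta$ is bent with $\W_{f_\alpha}(\beta)=\pm q$.
2. \emph{Cubic case.} If $\alpha$ is a cube, then $R=\{0,r_0,\omega r_0,\omega^2r_0\}$ with $r_0^3=\alpha^{-1}$, and $|R|=4$. Moreover $r_0$ may be taken in $\Fq$, and $\omega\in\F_4\subseteq\Fq$ because $e$ is even. So $R\subseteq\Fq$.

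The second step decides when the Walsh value vanishes. $\W_{f_\alpha}(\beta)\neq0$ exactly when $Q_\beta$ vanishes on $R$. For $r\in R\setminus\{0\}$,
\[
Q_\beta(r)=\Tr_{q^2}(1)+\Tr_{q^2}(\beta r)=0+\Tr_q\bigl(\beta(r+r^q)\bigr)=0,
\]
since $r\in\Fq$. So in the cubic case every subfield coefficient is $\pm2q$ and none is $0$.

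The third step fixes the signs by summing over the subfield:
\[
\sum_{\beta\in\Fq}\W_{f_\alpha}(\beta)=\sum_{y\in\Fqq}(-1)^{\Tr_{q^2}(\alpha y^3)}\sum_{\beta\in\Fq}(-1)^{\Tr_q(\beta(y+y^q))}.
\]
The inner sum equals $q$ exactly when $y+y^q=0$, i.e. when $y\in\Fq$, and is $0$ otherwise. For $y\in\Fq$ we have $\alpha y^3\in\Fq$, and then $\Tr_{q^2}(\alpha y^3)=\Tr_q(2\alpha y^3)=0$. Hence the total is $q\cdot q=q^2$.

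1. \emph{Noncubic case.} There are $q$ values, each equal to $\pm q$, summing to $q^2$. This forces every value to equal $q$, which is part (1).
2. \emph{Cubic case.} Let $a$ and $b$ be the numbers of values $2q$ and $-2q$. Then $a+b=q$ and $2q(a-b)=q^2$. This gives $a=3q/4$ and $b=q/4$, which is part (2).

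The only delicate point is the sign determination. The plan avoids computing Gauss-sum signs directly: it uses the nonvanishing from the radical analysis together with the single linear moment. I expect the radical computation, specifically that $R\subseteq\Fq$ in the cubic case, to be the step needing the most care.
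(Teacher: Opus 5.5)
Your proof is correct and complete, but it follows a different route from the one the paper relies on. The paper gives no proof of this proposition. It imports the statement from \cite{Cheng2026}, and the only part of that proof it records is the two-variable subfield formula of Proposition~\ref{prop:subfield-reduction-formula}, $\W_{f_\alpha}(a)=\sum_{x,y\in\Fq}\chi_q(\alpha y^3A_x+ay)$, together with the fact that $x\mapsto A_x$ is two-to-one onto the trace-one set $\mathcal T$. This is the same $A_x$, $\Phi$, $\Psi$ machinery that drives the outside reduction and Lemma~\ref{lem:subfield-comparison}.

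Your observation bypasses that machinery: for $\beta\in\Fq$ the terms $\beta y^d$ and $\beta y^{dq}$ of $\sigma$ cancel under $\Tr_{q^2}$. On the subfield, $\W_{f_\alpha}$ is therefore just the Walsh transform of the Gold component $\Tr_{q^2}(\alpha y^3)$. Standard quadratic-form theory then gives $|\W_{f_\alpha}(\beta)|=q$ when the radical is $\{0\}$, and $2q$ when it is $r_0\F_4\subseteq\Fq$, on which $Q_\beta$ vanishes. The linear moment $\sum_{\beta\in\Fq}\W_{f_\alpha}(\beta)=q^2$ then fixes the multiplicities. Your route is shorter and self-contained, using only that $\sigma$ permutes $\Fqq$ and that $\sigma$ fixes $\Fq$ pointwise.

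What the formula route buys in exchange is pointwise sign information. For $c\neq0$ one has $\sum_{A\in\mathcal T}\chi_q(cA)=-q/2$ if $c=1$ and $0$ otherwise. The formula therefore evaluates to $\W_{f_\alpha}(a)=q-q\sum_{y\in\Fq,\,\alpha y^3=1}\chi_q(ay)$. In the cubic case, $-2q$ then occurs exactly at those $a$ with $\Tr_q(ar_0u)=0$ for all $u\in\F_4$. Since the statement claims only multiplicities, your first-moment argument is enough.
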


Fix $\lambda\in\Fq$ with $\Tr_q(\lambda)=1$, and put
\[
  A_x=x^2+x+\lambda,
  \qquad
  \Phi(x)=A_x+A_x^{-1}+A_x^{-2}
  \qquad(x\in\Fq).
\]
The element $A_x$ is never zero, because $\Tr_q(x^2+x)=0$ while $\Tr_q(\lambda)=1$.

\begin{proposition}[{\cite[Prop.~2.3 and its proof]{Cheng2026}}]\label{prop:outside-reduction}
Let $\beta\in\Fqq\setminus\Fq$ and put $b=\beta+\beta^q\in\Fq^*$. Choose $\theta\in\Fqq$ such that $\theta^2+\theta=\lambda+1$ and $\theta^q=\theta+1$, and write uniquely
\[
  \beta=b(c+\theta),\qquad c\in\Fq.
\]
Then
\begin{equation}\label{eq:outside-reduction}
  \W_{f_\alpha}(\beta)+q
  =
  \sum_{x,u\in\Fq}
  \chiq{\alpha b^{-3}\Phi(x)u^3+(c+x+1)u},
\end{equation}
where $\chi_q(z)=(-1)^{\Tr_q(z)}$.
\end{proposition}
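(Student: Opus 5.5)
The plan is to compute $\W_{f_\alpha}(\beta)$ directly after the substitution $x=\sigma(y)$. Since $\sigma$ permutes $\Fqq$, we have
\[
  \W_{f_\alpha}(\beta)=\sum_{y\in\Fqq}(-1)^{\Tr_{q^2}(\alpha y^3)+\Tr_{q^2}(\beta y+\beta y^d+\beta y^{dq})}.
\]
Because $y^{dq}=(y^d)^q$ and $\Tr_{q^2}(\beta z^q)=\Tr_{q^2}(\beta^q z)$, the last two terms combine into $\Tr_{q^2}(b\,y^d)$ with $b=\beta+\beta^q\in\Fq^*$. Since $b\in\Fq$, this term equals $\Tr_q\bigl(b\,(y^d+y^{dq})\bigr)$, so the whole phase is
\[
  \Tr_{q^2}(\alpha y^3+\beta y)+\Tr_q\bigl(b(y^d+y^{dq})\bigr).
\]
Next I split $\Fqq$ along the $\Fq$-lines through the basis $\{1,\theta\}$. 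Every $y\in\Fqq\setminus\Fq$ can be written uniquely as $y=u(x+\theta)$ with $u\in\Fq^*$ and $x\in\Fq$, and the remaining points are $y\in\Fq$. One checks that $N(x+\theta)=(x+\theta)(x+\theta+1)=x^2+x+\lambda+1=A_x+1$. On each piece I push every trace down to $\Fq$ via $\Tr_{q^2}=\Tr_q\circ(z\mapsto z+z^q)$.

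On the line $y=u\in\Fq$, I use $3d\equiv 3\pmod{q-1}$ and $\Tr_{q^2}(\beta u)=\Tr_q(bu)$. This contribution should be explicit, and it should produce exactly the shift $+q$ on the left side of \eqref{eq:outside-reduction}, together with the $u=0$ terms that complete the double sum over $x,u\in\Fq$. On the generic points $y=u(x+\theta)$, the key computation is to evaluate $(x+\theta)^d+(x+\theta)^{dq}$ and $(x+\theta)^3+(x+\theta)^{3q}$ as elements of $\Fq$ in terms of $A_x$. The useful facts are the following.
\begin{itemize}
\item Since $d(q-1)\equiv (q^2-1)/3\cdot(\text{unit})$, the power $y^{d(q-1)}$ is a cube root of unity; this makes $y^d$ a norm-type expression twisted by a cubic character.
\item Writing $\beta=b(c+\theta)$ turns $\Tr_{q^2}(\beta y)$ into $\Tr_q\bigl(b\,u\,(\text{linear in }x,c)\bigr)$. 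This linear form should become $(c+x+1)\cdot(\text{scaled }u)$.
\end{itemize}
Then a rescaling $u\mapsto b^{-1}u\cdot(\text{function of }x)$, combined with reparametrizing the line coordinate $x$, should bring the phase to the form $\alpha b^{-3}\Phi(x)u^3+(c+x+1)u$, with $\Phi(x)=A_x+A_x^{-1}+A_x^{-2}$.

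I expect the main obstacle to be this middle step: obtaining the closed form of $(x+\theta)^d$, and seeing why the cubic coefficient collapses to $\Phi(x)$ while the $y^d$ contribution gets absorbed into the linear term. My first attempt would be to use $3d=q^2+q+1$, which gives $y^{3d}=y^{q+2}$ on $\Fqq$. Hence $(y^d)^3=N(y)\,y$, so $y^d$ is a specific cube root of $N(y)\,y$. I would then choose the parametrization of each line so that this cube root is rational in $x$; this is presumably where the choice $\theta^2+\theta=\lambda+1$ and the appearance of $A_x^{-1},A_x^{-2}$ come from. Once that identity is in hand, the remaining bookkeeping consists of collecting the $u=0$ terms and the $\Fq$-line into the $-q$ correction, which is routine orthogonality.
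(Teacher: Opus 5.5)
Your proposal has a genuine gap: the step you flag as the main obstacle is the entire content of the proposition, and the concrete setup you choose for it cannot work.

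\textbf{Where the proposed setup fails.} With $y=u(x+\theta)$ (here $y$ is the $\sigma$-preimage of the input), the cubic coefficient is $\Tr_{\Fqq/\Fq}\bigl((x+\theta)^3\bigr)=A_x$, not $\Phi(x)$. The $y^d$-term is $bu\,S_x$ with $S_x=(x+\theta)^d+(x+\theta)^{dq}$. Write $x+\theta=m\rho$ with $m\in\Fq^*$, $m^2=A_x+1$, $\rho^{q+1}=1$, and use that $d$ inverts $3$ modulo $q+1$. One finds $S_x^3+(A_x+1)S_x+(A_x+1)=0$. So $S_x$ is only an implicitly defined root of a cubic, and no bookkeeping in this parametrization turns the phase into $\alpha b^{-3}\Phi(x)u^3+(c+x+1)u$.

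There are three further problems.
\begin{enumerate}
\item Your first bullet is false. Since $y^{3d(q-1)}=y^{q^3-1}=y^{q-1}$, the element $y^{d(q-1)}$ is a cube root of $y^{q-1}$, not of $1$.
\item What you actually need is $d\equiv1\pmod{q-1}$, which holds because $d-1=(q-1)(q+2)/3$ and $3\mid q+2$. This is stronger than your $3d\equiv3$. Without it, rescaling $u$ by $b^{-1}$ would introduce cube roots of unity into the $y^d$-term.
\item The line $y\in\Fq$ contributes $\sum_{u\in\Fq}\chi_q(bu)=0$, not $q$. The $+q$ on the left is simply the $u=0$ slice of the right-hand side.
\end{enumerate}

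\textbf{The missing idea.} Your remark $(y^d)^3=N(y)\,y$ points in the right direction: the line direction must be cubed. This amounts to using the explicit inverse $\sigma^{-1}(w)=w^3/D$ for $w\ne0$, where $D=w^2+w\bar w+\bar w^2$ and $\bar w=w^q$.
\begin{itemize}
\item $D\in\Fq^*$ because $3\nmid q+1$.
\item $3d\equiv q+2\pmod{q^2-1}$ together with $d\equiv1\pmod{q-1}$ gives $(w^3/D)^d=w^2\bar w/D$. Hence $\sigma(w^3/D)=w(w^2+w\bar w+\bar w^2)/D=w$.
\end{itemize}

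Now parametrize the input rather than $y$: set $w=b^{-1}u(x+\theta)$ with $u\in\Fq^*$, $x\in\Fq$.
\begin{itemize}
\item For $z=x+\theta$ one has $z+\bar z=1$ and $z\bar z=A_x+1$. So $D=b^{-2}u^2A_x$ and $\sigma^{-1}(w)=b^{-1}uz^3/A_x$.
\item For $p_k=z^k+\bar z^k$, the recursion $p_k=p_{k-1}+(A_x+1)p_{k-2}$ gives $p_9=A_x+A_x^2+A_x^4$. Hence $\Tr_{q^2}\bigl(\alpha\sigma^{-1}(w)^3\bigr)=\Tr_q\bigl(\alpha b^{-3}\Phi(x)u^3\bigr)$; this is where $\Phi$ comes from.
\item The linear term is $\Tr_{q^2}(\beta w)=\Tr_q\bigl(u\Tr_{\Fqq/\Fq}((c+\theta)(x+\theta))\bigr)=\Tr_q\bigl((c+x+1)u\bigr)$.
\end{itemize}

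In your variables this is the substitution $y=b^{-1}u(x+\theta)^3/A_x$. For it, $y^d=b^{-1}uA_x^{-1}(A_x+1)(x+\theta)$ is explicit, and $\Tr_{\Fqq/\Fq}\bigl(\beta y+by^d\bigr)=u(c+x+1)$. So your closing sentence about rescaling $u$ by a function of $x$ and reparametrizing $x$ is compatible with the right substitution. But finding that substitution is the proof, and the proposal does not find it.
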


\subsection{Puncturing and completion}

For $\delta\in\Fq^*$ define a function $G_\delta$ on $\Fq^2$ by
\[
  G_\delta(s,t)=
  \begin{cases}
  \chiq{\delta s^3\Phi(t/s)},&s\ne0,\\
  0,&s=0.
  \end{cases}
\]
For a function $H$ on $\Fq^2$, write
\[
  \widehat H(a_1,a_2)=\sum_{s,t\in\Fq}H(s,t)\chiq{a_1s+a_2t}.
\]

\begin{lemma}[Punctured Fourier form]\label{lem:punctured-Fourier}
With the notation of Proposition~\ref{prop:outside-reduction},
\[
  \W_{f_\alpha}(\beta)=\widehat G_{\alpha b^{-3}}(c+1,1).
\]
\end{lemma}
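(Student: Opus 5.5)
We start from the outside reduction \eqref{eq:outside-reduction} of Proposition~\ref{prop:outside-reduction} and rewrite its right-hand side as a Fourier sum on $\Fq^2$. Put $\delta=\alpha b^{-3}\in\Fq^*$. The sum is
\[
  \sum_{x,u\in\Fq}\chiq{\delta\Phi(x)u^3+(c+x+1)u}.
\]
First split off the terms with $u=0$. These contribute $\sum_{x\in\Fq}1=q$, which cancels the $+q$ on the left. Hence
\[
  \W_{f_\alpha}(\beta)=\sum_{x\in\Fq}\sum_{u\in\Fq^*}\chiq{\delta\Phi(x)u^3+(c+x+1)u}.
\]

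Next change variables on the remaining terms by setting $s=u\in\Fq^*$ and $t=xu=xs$. For each fixed $s\neq0$, the map $x\mapsto t$ is a bijection of $\Fq$. Under this change we have $x=t/s$, $\delta\Phi(x)u^3=\delta s^3\Phi(t/s)$, and $(c+x+1)u=(c+1)s+t$. The summand therefore becomes
\[
  \chiq{\delta s^3\Phi(t/s)}\,\chiq{(c+1)s+t}=G_\delta(s,t)\,\chiq{(c+1)s+1\cdot t}
  \qquad (s\neq0).
\]

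Finally, $G_\delta$ vanishes on the line $s=0$, so we may extend the sum to all $(s,t)\in\Fq^2$ without changing it. The result is exactly $\widehat G_{\delta}(c+1,1)$.

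There is no real obstacle here; the one point needing care is the $u=0$ bookkeeping. Those terms must contribute exactly $q$ to cancel the shift on the left, and the punctured line $s=0$ must correspond precisely to $u=0$. The sign and normalisation of $\chi_q$ should also be checked to match the definition of $\widehat H$.
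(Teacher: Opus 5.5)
Your proof is correct and follows the paper's argument exactly: remove the $u=0$ slice (which contributes $q$), substitute $s=u$, $t=ux$, and recognize the result as $\widehat G_{\alpha b^{-3}}(c+1,1)$. You also state explicitly the final step that $G_\delta$ vanishes on $s=0$, so the sum may be extended over all of $\Fq^2$, which the paper leaves implicit.
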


\begin{proof}
In \eqref{eq:outside-reduction}, the slice $u=0$ contributes $q$.  Removing it gives
\[
  \W_{f_\alpha}(\beta)
  =\sum_{u\in\Fq^*,\,x\in\Fq}
  \chiq{\alpha b^{-3}\Phi(x)u^3+(c+x+1)u}.
\]
Put $s=u$ and $t=ux$.  Then $x=t/s$, and the last display becomes
\[
  \sum_{s\in\Fq^*,\,t\in\Fq}
  \chiq{\alpha b^{-3}s^3\Phi(t/s)+(c+1)s+t}
  =\widehat G_{\alpha b^{-3}}(c+1,1).
\]
\end{proof}

Now fill the missing line $s=0$ by the constant value $1$:
\[
  B_\delta(s,t)=
  \begin{cases}
  \chiq{\delta s^3\Phi(t/s)},&s\ne0,\\
  1,&s=0.
  \end{cases}
\]
This changes no outside coefficient, since
\[
  \widehat B_\delta(c+1,1)-\widehat G_\delta(c+1,1)
  =\sum_{t\in\Fq}\chiq{t}=0.
\]

Choose $\eta\in\Fqq$ satisfying $\eta^2+\eta=\lambda$ and $\eta^q=\eta+1$.  Every $X\in\Fqq$ has a unique expression
\[
  X=t+s\eta,
  \qquad s,t\in\Fq.
\]

\begin{lemma}\label{lem:Kasami-completion}
For every $\delta\in\Fq^*$ and every $s,t\in\Fq$, with $X=t+s\eta$, one has
\begin{equation}\label{eq:Kasami-completion}
  B_\delta(s,t)=(-1)^{\Tr_{q^2}(\delta X^K)}.
\end{equation}
\end{lemma}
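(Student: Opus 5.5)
The plan is a direct computation splitting on whether $s=0$, using two facts: $K$ collapses to a small exponent on $\Fq$, and $\eta$ behaves like an Artin--Schreier generator under the $q$-Frobenius.

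\emph{Case $s=0$.} Here $X=t\in\Fq$, so $t^q=t$ and $t^{q^2}=t$. Since $K=4q^2-2q+1$, we get $X^K=t^4\cdot t^{-2}\cdot t=t^3$ for $t\neq 0$, and also trivially for $t=0$. Then $\delta X^K=\delta t^3$ lies in $\Fq$. By transitivity of trace,
\[
\Tr_{q^2}(\delta t^3)=\Tr_q\bigl(\delta t^3+(\delta t^3)^q\bigr)=\Tr_q(2\delta t^3)=0.
\]
So the right side of \eqref{eq:Kasami-completion} is $1$, which equals $B_\delta(0,t)$.

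\emph{Case $s\neq0$.} Put $x=t/s\in\Fq$ and $Y=x+\eta$, so $X=sY$. Since $s\in\Fq^*$, the same collapse gives $s^K=s^3$, hence $\delta X^K=\delta s^3Y^K$ with $\delta s^3\in\Fq$. Transitivity of trace then reduces the claim to the relative-trace identity
\[
Y^K+Y^{qK}=\Phi(x).
\]

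\emph{The relative-trace identity.} From $\eta^q=\eta+1$ we get $Y^q=Y+1$, and from $\eta^2+\eta=\lambda$ we get
\[
Y^{q+1}=Y(Y+1)=x^2+x+\lambda=A_x\neq0 .
\]
Using $Y^{q^2}=Y$, we have $Y^K=Y^4\,Y^{-2q}\,Y=Y^5/(Y+1)^2$. Applying Frobenius gives $Y^{qK}=(Y+1)^5/Y^2$. Putting both over the common denominator $Y^2(Y+1)^2=A_x^2$,
\[
Y^K+Y^{qK}=\frac{Y^7+(Y+1)^7}{A_x^2}.
\]
All binomial coefficients $\binom7i$ are odd, so in characteristic two the numerator is $1+Y+\dots+Y^6$. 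With $A=Y^2+Y$ we have $A^3=Y^6+Y^5+Y^4+Y^3$, and therefore
\[
1+Y+\dots+Y^6=A^3+A+1 .
\]
Dividing by $A^2$ gives $A_x+A_x^{-1}+A_x^{-2}=\Phi(x)$, as required.

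The only delicate step is this last one: spotting that the numerator is a polynomial in the norm $A_x=Y^2+Y$. Everything else is exponent bookkeeping with $Y^{q^2}=Y$ and $Y^q=Y+1$.
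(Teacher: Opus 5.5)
Your proof is correct and follows essentially the same route as the paper. Both arguments split on $s=0$, reduce via transitivity of trace to the relative trace over $\Fq$, and use $K\equiv 5-2q \pmod{q^2-1}$ together with $K\equiv 3\pmod{q-1}$. The only difference is the choice of coordinates: you first scale out $s$ and check $Y^7+(Y+1)^7=A^3+A+1$ in the affine coordinate $Y=x+\eta$, whereas the paper keeps $X$ homogeneous and checks the same identity in the variable $r=X/X^q=Y/(Y+1)$.
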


\begin{proof}
First assume $s\ne0$.  Write $\bar X=X^q$ and put $r=X/\bar X$.  Since $\bar X=t+s(\eta+1)$, we have $X+\bar X=s$.  Hence
\[
  A_{t/s}
  =\frac{t^2+st+\lambda s^2}{s^2}
  =\frac{X\bar X}{(X+\bar X)^2}
  =\frac{r}{(r+1)^2}.
\]
Also $s=(r+1)\bar X$.  Since
\[
  \Phi(t/s)=A_{t/s}+A_{t/s}^{-1}+A_{t/s}^{-2},
\]
we obtain
\[
\begin{aligned}
  s^3\Phi(t/s)
  &=(r+1)^3\bar X^3\left(\frac r{(r+1)^2}
     +\frac{(r+1)^2}{r}+\frac{(r+1)^4}{r^2}\right)  \\
  &=\bar X^3\left(r(r+1)+\frac{(r+1)^5}{r}
     +\frac{(r+1)^7}{r^2}\right).
\end{aligned}
\]
In characteristic two,
\[
  r(r+1)+\frac{(r+1)^5}{r}+\frac{(r+1)^7}{r^2}
  =r^5+r^{-2}.
\]
Therefore
\[
  s^3\Phi(t/s)=X^5\bar X^{-2}+\bar X^5X^{-2}.
\]
Multiplying by $\delta\in\Fq$, we get
\[
  \delta s^3\Phi(t/s)
  =
  \delta X^5\bar X^{-2}+\delta\bar X^5X^{-2}.
\]
The two terms on the right are conjugate over $\Fq$. Indeed, if
$Y=\delta X^5\bar X^{-2}$, then, since $\delta\in\Fq$ and $\bar X=X^q$, one has
\[
  Y^q=\delta\bar X^5X^{-2}.
\]
Hence
\[
  \delta s^3\Phi(t/s)=Y+Y^q
  =\Tr_{\Fqq/\Fq}(Y).
\]
By transitivity of trace,
\[
  \Tr_q\bigl(\delta s^3\Phi(t/s)\bigr)
  =
  \Tr_q\bigl(Y+Y^q\bigr)
  =
  \Tr_{q^2}(Y)
  =
  \Tr_{q^2}\bigl(\delta X^5\bar X^{-2}\bigr).
\]
Since $\bar X=X^q$ and $X\ne0$, we have $X^5\bar X^{-2}=X^{5-2q}$.  Finally,
\[
  K=4q^2-2q+1\equiv5-2q\pmod{q^2-1}.
\]
This proves \eqref{eq:Kasami-completion} for $s\ne0$.

If $s=0$, then $X=t\in\Fq$.  For $t=0$, both sides of \eqref{eq:Kasami-completion} are $1$.  For $t\ne0$, the congruence $K\equiv3\pmod{q-1}$ gives $\delta X^K=\delta t^3\in\Fq$, and hence
\[
  \Tr_{q^2}(\delta X^K)=\Tr_q(2\delta t^3)=0.
\]
Thus the right side of \eqref{eq:Kasami-completion} is again $1$, which agrees with the definition of $B_\delta$ on $s=0$.
\end{proof}

\begin{lemma}\label{lem:linear-term}
For $a,b,s,t\in\Fq$ and $X=t+s\eta$, one has
\begin{equation}\label{eq:linear-term}
  \Tr_{q^2}\bigl((a+b\eta)X\bigr)=\Tr_q\bigl((a+b)s+bt\bigr).
\end{equation}
\end{lemma}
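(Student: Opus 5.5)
The identity \eqref{eq:linear-term} is $\Fq$-linear in the pair $(a,b)$ and in $X$, so the plan is a direct computation. We expand the product $(a+b\eta)X$, push the absolute trace through the relative trace $\Tr_{\Fqq/\Fq}$, and use the two defining properties of $\eta$, namely $\eta^q=\eta+1$ and $\eta^2+\eta=\lambda$.

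First, with $X=t+s\eta$,
\[
  (a+b\eta)(t+s\eta)=at+(as+bt)\eta+bs\eta^2 .
\]
By transitivity of trace, $\Tr_{q^2}(Z)=\Tr_q\bigl(Z+Z^q\bigr)$ for every $Z\in\Fqq$. Since $a,b,s,t\in\Fq$, it suffices to know the relative traces of $1$, $\eta$ and $\eta^2$:
\begin{itemize}
\item $1+1^q=0$;
\item $\eta+\eta^q=1$, because $\eta^q=\eta+1$;
\item $\eta^2+(\eta^q)^2=\eta^2+(\eta+1)^2=1$.
\end{itemize}

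Summing, the relative trace of the product is
\[
  0\cdot at+(as+bt)\cdot1+bs\cdot1=as+bt+bs=(a+b)s+bt .
\]
Applying $\Tr_q$ then gives \eqref{eq:linear-term}.

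There is no real obstacle here. The only point to check is the last trace, $\Tr_{\Fqq/\Fq}(\eta^2)=1$. One could also obtain it by writing $\eta^2=\eta+\lambda$, whose relative trace is $1+2\lambda=1$; this is consistent with the direct computation. The relation $\eta^2+\eta=\lambda$ itself is never needed beyond this optional check.

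This lemma is what lets the outside frequency $(c+1,1)$ appearing in Lemma~\ref{lem:punctured-Fourier} correspond to a linear functional on $\Fqq$. Taking $b=1$ and $a=c$ gives $\Tr_q\bigl((c+1)s+t\bigr)$, so $\widehat B_\delta(c+1,1)=S_\delta(c+\eta)$ by Lemma~\ref{lem:Kasami-completion}.
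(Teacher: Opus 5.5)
Your proof is correct and is essentially the paper's argument: both expand $(a+b\eta)(t+s\eta)$, use $\eta^q=\eta+1$ to compute the relative trace $\Tr_{\Fqq/\Fq}$, and then apply $\Tr_q$. The only cosmetic difference is that the paper first rewrites $\eta^2=\eta+\lambda$ and reads off the $\eta$-coefficient, whereas you compute $\Tr_{\Fqq/\Fq}(\eta^2)=1$ directly from $(\eta+1)^2=\eta^2+1$.
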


\begin{proof}
Since $\eta^2=\eta+\lambda$, the coefficient of $\eta$ in $(a+b\eta)(t+s\eta)$ is $as+bt+bs$.  The relative trace from $\Fqq$ to $\Fq$ is the coefficient of $\eta$, because $\eta^q=\eta+1$.  Applying $\Tr_q$ gives \eqref{eq:linear-term}.
\end{proof}

\begin{proposition}[{\cite[Proof of Theorem~1.2]{Cheng2026}}]
\label{prop:subfield-reduction-formula}
Let $\alpha\in\Fq^*$ and $a\in\Fq$.  With
\[
  A_x=x^2+x+\lambda\qquad (x\in\Fq),
\]
one has
\begin{equation}\label{eq:subfield-reduction-formula}
  \W_{f_\alpha}(a)
  =
  \sum_{x,y\in\Fq}
  \chi_q\bigl(\alpha y^3A_x+ay\bigr).
\end{equation}
Moreover, if
\[
  \mathcal T=\{A\in\Fq^*:\Tr_q(A)=1\},
  \qquad
\Psi(A)=A+A^{-1}+A^{-2},
\]
then the map $x\mapsto A_x$ is two-to-one from $\Fq$ onto $\mathcal T$,
and $\Psi$ permutes $\mathcal T$.
\end{proposition}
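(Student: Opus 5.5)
The plan is to prove the three assertions separately. The exponential-sum identity is a direct computation. The two-to-one claim is elementary. The permutation property of $\Psi$ is where the real work lies.

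\emph{The identity \eqref{eq:subfield-reduction-formula}.} Substitute $x=\sigma(z)$ in the definition of $\W_{f_\alpha}(a)$; this is legitimate because $\sigma$ permutes $\Fqq$. This gives
\[
\W_{f_\alpha}(a)=\sum_{z\in\Fqq}(-1)^{\Tr_{q^2}(\alpha z^3+a\sigma(z))}.
\]
Since $a\in\Fq$, we have $\Tr_{q^2}(az^{dq})=\Tr_{q^2}\bigl((az^d)^q\bigr)=\Tr_{q^2}(az^d)$. Hence the two non-linear terms of $\sigma$ cancel inside the trace, leaving $\Tr_{q^2}(\alpha z^3+az)$.

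Next parametrize $\Fqq$ using the element $\theta$ of Proposition~\ref{prop:outside-reduction}, which satisfies $\theta^2+\theta=\lambda+1$ and $\theta^q=\theta+1$:
\begin{itemize}
\item every $z\notin\Fq$ is uniquely $z=y(x+\theta)$ with $y\in\Fq^*$ and $x\in\Fq$;
\item the elements $z\in\Fq$ are handled separately.
\end{itemize}
Put $w=x+\theta$. Then $w+w^q=1$ and $ww^q=x^2+x+\lambda+1=A_x+1$. It follows that
\[
w^3+w^{3q}=(w+w^q)^3+ww^q(w+w^q)=A_x.
\]
By transitivity of trace,
\[
\Tr_{q^2}(\alpha z^3+az)=\Tr_q(\alpha y^3A_x+ay).
\]
For $z\in\Fq$ the trace is $\Tr_q(2(\alpha z^3+az))=0$, so these $q$ points contribute $q$. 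That is exactly the contribution of the slice $y=0$ on the right of \eqref{eq:subfield-reduction-formula}, which proves the identity.

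\emph{The map $x\mapsto A_x$.} We have $A_{x+1}=A_x$, and $x^2+x$ takes each value of trace $0$ exactly twice. Also $\Tr_q(A_x)=\Tr_q(\lambda)=1$, and $A_x\neq0$ as already noted. Since both sets have size $q/2$, the map is two-to-one from $\Fq$ onto $\mathcal T$.

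\emph{$\Psi$ permutes $\mathcal T$.} First, $\Psi$ maps $\mathcal T$ into $\mathcal T$, because $\Tr_q(A^{-1}+A^{-2})=0$. Also $\Psi(A)\ne0$, since $\Tr_q(\Psi(A))=1$. For injectivity, suppose $A\ne B$ lie in $\mathcal T$ with $\Psi(A)=\Psi(B)$. Put $S=A+B$ and $P=AB$. Factoring out $S$ gives $1+P^{-1}+SP^{-2}=0$, that is, $S=P^2+P$. It then remains to show that $Z^2+SZ+P$ cannot have two roots in $\mathcal T$.

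I expect this step to be the main obstacle. The first tool to try is the same norm-circle substitution as in Lemma~\ref{lem:Kasami-completion}. Write $A=r/(r+1)^2$ with $r^{q+1}=1$ and $r\ne1$; the map $r\mapsto A$ is two-to-one onto $\mathcal T$, identifying $r$ with $r^{-1}$. The goal is then a relation of the form $\Psi(r/(r+1)^2)=r'/(r'+1)^2$ with $r'$ a monomial in $r$, suggested by the expression $r^5+r^{-2}$ found in that lemma. This would reduce injectivity to a power map on the cyclic group of order $q+1$ with exponent coprime to $q+1$.

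If that fails, the fallback is to combine the root condition $\Tr_q\bigl(P/S^2\bigr)=0$ with $S=P^2+P$. Here $P/S^2=1/(P(P+1)^2)$, and the trace condition would be played against the trace conditions $\Tr_q(A)=\Tr_q(B)=1$, aiming for a contradiction.
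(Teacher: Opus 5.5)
Your proofs of the identity \eqref{eq:subfield-reduction-formula} and of the two-to-one property of $x\mapsto A_x$ are correct. For the identity you substitute $x=\sigma(z)$, cancel the $z^d$ and $z^{dq}$ terms inside $\Tr_{q^2}$, and parametrize $z=y(x+\theta)$. The paper gives no proof of this proposition; it imports it from \cite{Cheng2026}.

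The gap is the last assertion. You show $\Psi(\mathcal T)\subseteq\mathcal T$ and reduce a collision to $S=P^2+P$, but you never prove injectivity, and your first route cannot work. By the computation in Lemma~\ref{lem:Kasami-completion}, $\Psi\bigl(r/(r+1)^2\bigr)=(r^5+r^{-2})/(r+1)^3=(r^7+1)/\bigl(r^2(r+1)^3\bigr)$. Requiring this to equal $r^k/(r^k+1)^2$ on $\mu_{q+1}\setminus\{1\}$, where $\mu_{q+1}=\{r\in\Fqq:r^{q+1}=1\}$, means $r^{k+2}(r+1)^3=(r^{2k}+1)(r^7+1)$ there. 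Both sides vanish at $r=1$. The characters $r\mapsto r^j$ of the cyclic group $\mu_{q+1}$ are linearly independent, so the identity becomes an equality of exponent sets modulo $q+1$. The left side has the four consecutive exponents $k+2,\dots,k+5$. The right side, if it has four terms at all, contains $0$ and $7$, which is impossible once $q\ge16$. Your fallback gives no mechanism either: $\Tr_q(A)=\Tr_q(B)=1$ only yields $\Tr_q(S)=0$, which already follows from $S=P^2+P$. You would need the trace of an individual root of $Z^2+SZ+P$, and you give no way to compute it.

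The missing idea is that the correct $r'$ is a cubic rational function, which becomes a power map only after a M\"obius change of variable. Put $g=r^3+r+1$ and $g^*=r^3+r^2+1$. Then $r^7+1=(r+1)gg^*$, so $\Psi\bigl(r/(r+1)^2\bigr)=r'/(r'+1)^2$ with $r'=g(r)/g^*(r)$. On $\mu_{q+1}$ one has $g^*(r)=r^3g(r)^q$, and $g(r)\neq0$ because $7\nmid q+1$. Hence $r'\in\mu_{q+1}$, and replacing $r$ by $r^{-1}$ replaces $r'$ by $r'^{-1}$.

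Now take $\omega\in\F_4\setminus\Ftwo\subseteq\Fq$ and $z=(r+\omega)/(r+\omega^2)$. The identities $g+\omega g^*=\omega^2(r+\omega^2)^3$ and $g+\omega^2g^*=\omega(r+\omega)^3$ give $z'=\omega z^{-3}$. Since $\omega\in\Fq\setminus\mu_{q+1}$, one gets $z^q=\omega^2/z$, so this M\"obius map sends $\mu_{q+1}$ bijectively onto the coset $\{z:z^{q+1}=\omega^2\}$. Because $\omega^{q+1}=\omega^2$, the map $z\mapsto\omega z^{-3}$ preserves this coset. It permutes the coset precisely because $\gcd(3,q+1)=1$, and this is where $e$ even is used. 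Thus $r\mapsto r'$ permutes $\mu_{q+1}$ compatibly with $r\leftrightarrow r^{-1}$. Since $r\mapsto r/(r+1)^2$ identifies exactly $r$ and $r^{-1}$, $\Psi$ is injective on $\mathcal T$. Equivalently, inside $\Fq$: $\Psi(A)+1=(A+1)^3/A^2$, and the substitution $w=\omega/(A+1)$ conjugates $\Psi$ to $w\mapsto\omega w^3+w$.
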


\begin{lemma}\label{lem:subfield-comparison}
For every $a\in\Fq$ and every $\alpha\in\Fq^*$,
\[
  S_\alpha(a)=\W_{f_\alpha}(a),
\]
where
\[
  S_\alpha(\Gamma)=
  \sum_{X\in\Fqq}
  (-1)^{\Tr_{q^2}(\alpha X^K+\Gamma X)}.
\]
\end{lemma}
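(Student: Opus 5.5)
We must show $S_\alpha(a)=W_{f_\alpha}(a)$ for $a\in\Fq$, using the coordinates $X=t+s\eta$ and the completion identity. The plan is to reduce both sides to the same sum over $\Fq^2$.

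\emph{Step 1: rewrite $S_\alpha(a)$.} By Lemma~\ref{lem:Kasami-completion} with $\delta=\alpha$, we have $(-1)^{\Tr_{q^2}(\alpha X^K)}=B_\alpha(s,t)$. By Lemma~\ref{lem:linear-term} with $b=0$, we have $\Tr_{q^2}(aX)=\Tr_q(as)$. Hence
\[
  S_\alpha(a)=\sum_{s,t\in\Fq}B_\alpha(s,t)\chi_q(as).
\]
The line $s=0$ contributes $q$. On $s\ne0$, substituting $t=sx$ gives
\[
  S_\alpha(a)=q+\sum_{s\in\Fq^*}\chi_q(as)\sum_{x\in\Fq}\chi_q\bigl(\alpha s^3\Phi(x)\bigr).
\]

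\emph{Step 2: rewrite $W_{f_\alpha}(a)$.} By \eqref{eq:subfield-reduction-formula}, the slice $y=0$ contributes $q$. For each fixed $y\ne0$, the inner sum over $x$ depends on $x$ only through $A_x$. Since $x\mapsto A_x$ is two-to-one onto $\mathcal T$, we get
\[
  \sum_x\chi_q(\alpha y^3A_x)=2\sum_{A\in\mathcal T}\chi_q(\alpha y^3A).
\]
Because $\Psi$ permutes $\mathcal T$, this equals $2\sum_{A\in\mathcal T}\chi_q(\alpha y^3\Psi(A))$. Running the two-to-one parametrization backwards, that is $\sum_x\chi_q(\alpha y^3\Phi(x))$, since $\Phi(x)=\Psi(A_x)$. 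Therefore
\[
  W_{f_\alpha}(a)=q+\sum_{y\ne0}\chi_q(ay)\sum_x\chi_q\bigl(\alpha y^3\Phi(x)\bigr),
\]
which matches Step 1 with $y=s$.

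The only delicate point is the bookkeeping in Step 2: the permutation of $\mathcal T$ by $\Psi$ has to be applied after passing to the sum over $\mathcal T$ with the factor $2$. The rest is direct substitution.
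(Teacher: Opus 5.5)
Your proof is correct and is essentially the paper's argument. Both rewrite $S_\alpha(a)$ via Lemmas~\ref{lem:Kasami-completion} and~\ref{lem:linear-term}, split off the line $s=0$, and then use that $x\mapsto A_x$ is two-to-one onto $\mathcal T$, that $\Psi$ permutes $\mathcal T$, and that $\Phi(x)=\Psi(A_x)$. The only difference is cosmetic: the paper reduces both sides to the common expression $q+2\sum_{s\in\Fq^*,\,A\in\mathcal T}\chi_q(\alpha s^3A+as)$, whereas you convert the inner sum on the Walsh side directly into the one on the $S$ side.
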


\begin{proof}
Put
\[
  \mathcal T=\{A\in\Fq^*:\Tr_q(A)=1\},
  \qquad
  \Psi(A)=A+A^{-1}+A^{-2}.
\]
By Lemmas~\ref{lem:Kasami-completion} and~\ref{lem:linear-term}, applied
with $\Gamma=a$, we have
\[
\begin{aligned}
  S_\alpha(a)
  &=
  q+\sum_{s\in\Fq^*,\,t\in\Fq}
  \chi_q\bigl(\alpha s^3\Phi(t/s)+as\bigr)      \\
  &=
  q+\sum_{s\in\Fq^*,\,x\in\Fq}
  \chi_q\bigl(\alpha s^3\Phi(x)+as\bigr),
\end{aligned}
\]
where in the second line we put $x=t/s$.  Since
\[
  \Phi(x)=A_x+A_x^{-1}+A_x^{-2}=\Psi(A_x),
\]
and since, by Proposition~\ref{prop:subfield-reduction-formula}, the map
$x\mapsto A_x$ is two-to-one from $\Fq$ onto $\mathcal T$ while $\Psi$
permutes $\mathcal T$, it follows that
\[
  S_\alpha(a)
  =
  q+2\sum_{s\in\Fq^*,\,A\in\mathcal T}
  \chi_q\bigl(\alpha s^3A+as\bigr).
\]
Using again that $x\mapsto A_x$ is two-to-one from $\Fq$ onto $\mathcal T$,
we also have
\[
\begin{aligned}
  \sum_{x,s\in\Fq}
  \chi_q\bigl(\alpha s^3A_x+as\bigr)
  &=
  q+
  \sum_{s\in\Fq^*,\,x\in\Fq}
  \chi_q\bigl(\alpha s^3A_x+as\bigr)       \\
  &=
  q+2\sum_{s\in\Fq^*,\,A\in\mathcal T}
  \chi_q\bigl(\alpha s^3A+as\bigr).
\end{aligned}
\]
Therefore
\[
  S_\alpha(a)
  =
  \sum_{x,s\in\Fq}
  \chi_q\bigl(\alpha s^3A_x+as\bigr).
\]
Renaming $s$ as $y$ and applying
Proposition~\ref{prop:subfield-reduction-formula}, we get
\[  S_\alpha(a)=\W_{f_\alpha}(a),
\]
as required.
\end{proof}

\begin{proposition}\label{prop:component-Walsh-comparison}
For every $\alpha\in\Fq^*$, the multisets
\[
  \set{S_\alpha(\Gamma):\Gamma\in\Fqq}
  \qquad\text{and}\qquad
  \set{\W_{f_\alpha}(\beta):\beta\in\Fqq}
\]
are equal, where
\[
  S_\alpha(\Gamma)=\sum_{X\in\Fqq}(-1)^{\Tr_{q^2}(\alpha X^K+\Gamma X)}.
\]
More precisely, if $\Gamma=a+b\eta$ with $a,b\in\Fq$, then $S_\alpha(\Gamma)=\W_{f_\alpha}(\beta)$ for $\beta=a+b\theta$, where $\theta^2+\theta=\lambda+1$ and $\theta^q=\theta+1$.
\end{proposition}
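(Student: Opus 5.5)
The plan is to prove the pointwise identity $S_\alpha(a+b\eta)=\W_{f_\alpha}(a+b\theta)$ for all $a,b\in\Fq$. The multiset statement then follows at once. Indeed, $(a,b)\mapsto a+b\eta$ and $(a,b)\mapsto a+b\theta$ are both bijections $\Fq^2\to\Fqq$, since $\eta,\theta\notin\Fq$ (their conjugates are $\eta+1$ and $\theta+1$). I would split into two cases according to whether $b=0$ or $b\neq0$.

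\emph{Case $b=0$.} Here $\Gamma=a=\beta\in\Fq$, and the claim is exactly Lemma~\ref{lem:subfield-comparison}.

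\emph{Case $b\ne0$.} Here $\beta=a+b\theta\notin\Fq$ and $\beta+\beta^q=b(\theta+\theta^q)=b$. So in Proposition~\ref{prop:outside-reduction} the element $b$ is the same $b$, and $\beta=b(c+\theta)$ with $c=a/b$.

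First I would rewrite $\W_{f_\alpha}(\beta)$. By Lemma~\ref{lem:punctured-Fourier} it equals $\widehat G_{\alpha b^{-3}}(c+1,1)$. By the completion remark, this equals $\widehat B_{\alpha b^{-3}}(c+1,1)$, because the added line contributes $\sum_t\chiq{t}=0$.

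Next I would rescale by $s=bs'$, $t=bt'$, which is a bijection of $\Fq^2$ preserving the line $s=0$. For $s\ne0$ we have $t/s=t'/s'$, so
\[
  \alpha b^{-3}s^3\Phi(t/s)=\alpha s'^3\Phi(t'/s').
\]
On the line the value $1$ is unchanged. Hence $B_{\alpha b^{-3}}(s,t)=B_\alpha(s',t')$. The linear phase transforms as
\[
  (c+1)s+t=(a/b+1)bs'+bt'=(a+b)s'+bt'.
\]
Therefore
\[
  \W_{f_\alpha}(\beta)=\sum_{s',t'\in\Fq}B_\alpha(s',t')\,\chiq{(a+b)s'+bt'}.
\]

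On the other side, I would parametrize $X=t'+s'\eta$ in $S_\alpha(\Gamma)$. Lemma~\ref{lem:Kasami-completion} with $\delta=\alpha$ gives $(-1)^{\Tr_{q^2}(\alpha X^K)}=B_\alpha(s',t')$. Lemma~\ref{lem:linear-term} gives $(-1)^{\Tr_{q^2}(\Gamma X)}=\chiq{(a+b)s'+bt'}$. Summing over $(s',t')\in\Fq^2$ reproduces exactly the display above, so $S_\alpha(\Gamma)=\W_{f_\alpha}(\beta)$.

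The only delicate step is the bookkeeping in the case $b\ne0$. One has to check that the $b$ of Proposition~\ref{prop:outside-reduction} coincides with the $\eta$-coefficient of $\Gamma$. One also has to check that the scaling by $b$ absorbs the factor $b^{-3}$ in $\delta$ while also turning the frequency $(c+1,1)$ into the frequency $(a+b,b)$ produced by Lemma~\ref{lem:linear-term}. Both checks are direct computations.
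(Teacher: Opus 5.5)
Your proof is correct and follows the paper's argument. The paper also treats $b=0$ by Lemma~\ref{lem:subfield-comparison}, and for $b\neq0$ it chains the substitution $X=b^{-1}Y$, Lemmas~\ref{lem:Kasami-completion} and~\ref{lem:linear-term}, the invisibility of the line $s=0$, and Lemma~\ref{lem:punctured-Fourier}. Its substitution is your rescaling $s=bs'$, $t=bt'$ written in $\Fqq$, with $b^{-K}=b^{-3}$ replacing your homogeneity check; you additionally spell out $\beta+\beta^q=b$ and the bijectivity of both parametrizations, which the paper leaves implicit.
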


\begin{proof}
First suppose $b\ne0$ and put $c=a/b$.  By Lemmas~\ref{lem:Kasami-completion} and~\ref{lem:linear-term}, and by the change of variable $X=b^{-1}Y$ in $\Fqq$, we have
\[
  S_\alpha(a+b\eta)=S_{\alpha b^{-3}}(c+\eta)=\widehat B_{\alpha b^{-3}}(c+1,1).
\]
The line $s=0$ is invisible at second frequency $1$, so the last term equals $\widehat G_{\alpha b^{-3}}(c+1,1)$.  By Lemma~\ref{lem:punctured-Fourier}, this is $\W_{f_\alpha}(b(c+\theta))=\W_{f_\alpha}(a+b\theta)$.

It remains to consider $b=0$. This is exactly Lemma~\ref{lem:subfield-comparison}. The proposition follows.
\end{proof}

\section{Fourth moments and the cubic outside spectrum}

\subsection{Fourth moment for cubic components}

We now combine the completion with the APN fourth moment.  Put $N=q^2=\abs{\Fqq}$.

\begin{proposition}\label{prop:cubic-component-fourth}
If $A\in(\Fqq^*)^3$, then
\[
  \sum_{B\in\Fqq}S_A(B)^4=4q^6.
\]
If $A\notin(\Fqq^*)^3$, then
\[
  \sum_{B\in\Fqq}S_A(B)^4=q^6.
\]
\end{proposition}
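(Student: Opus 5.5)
We combine the APN fourth-moment identity with the cubic-class invariance and then calibrate using the known noncubic spectrum. By Lemma~\ref{lem:class-moment}, $M_A=\sum_B S_A(B)^4$ depends only on the class of $A$ in $\Fqq^*/(\Fqq^*)^3$, which has three classes each of size $(q^2-1)/3$. Write $M_0$ for the moment of the cube class and $M_1,M_2$ for the two noncube classes. Proposition~\ref{prop:Kasami-APN} applies with $n=2e$, $r=e+1$, so Lemma~\ref{lem:APN-fourth-moment} with $N=q^2$ gives
\[
  \frac{q^2-1}{3}\,(M_0+M_1+M_2)=2q^6(q^2-1),
  \qquad\text{i.e.}\qquad M_0+M_1+M_2=6q^6 .
\]

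Next we compute $M_1$ and $M_2$ directly. Since $3\mid q-1$, the subfield $\Fq^*$ meets every cubic class of $\Fqq^*$. By Lemma~\ref{lem:cube-class-subfield}, a noncube $\alpha\in\Fq^*$ remains a noncube in $\Fqq^*$, and the two noncube classes of $\Fq^*$ map to the two distinct noncube classes of $\Fqq^*$. This holds because the inclusion $\Fq^*/(\Fq^*)^3\to\Fqq^*/(\Fqq^*)^3$ is injective by that lemma and both groups have order $3$. So each noncube class of $\Fqq^*$ contains some noncube $\alpha\in\Fq^*$.

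For such an $\alpha$, Proposition~\ref{prop:component-Walsh-comparison} says that the multiset $\{S_\alpha(\Gamma)\}$ equals the Walsh multiset of $f_\alpha$. By \cite{Cheng2026} (Proposition~\ref{prop:subfield-spectrum} together with Remark~\ref{rem:relation-to-Cheng}), $f_\alpha$ is bent, so every value is $\pm q$. Hence $M_1=M_2=q^2\cdot q^4=q^6$, and therefore $M_0=6q^6-2q^6=4q^6$.

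The only delicate point is making sure both noncube classes are genuinely realized by subfield elements, since the calibration uses the noncubic bentness only for $\alpha\in\Fq^*$. The counting argument above settles this. Everything else is routine bookkeeping of the class sizes.
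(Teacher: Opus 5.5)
Your proposal is correct and follows the paper's proof essentially step for step. It splits the APN fourth-moment total $2q^6(q^2-1)$ over the three cubic classes via Lemma~\ref{lem:class-moment}, uses the bijection $\Fq^*/(\Fq^*)^3\to\Fqq^*/(\Fqq^*)^3$ from Lemma~\ref{lem:cube-class-subfield} to place a subfield noncube in each noncube class, and calibrates $M_1=M_2=q^6$ from the bent spectrum via Proposition~\ref{prop:component-Walsh-comparison}, which leaves $M_0=4q^6$.
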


\begin{proof}
By Proposition~\ref{prop:Kasami-APN}, the map $X\mapsto X^K$ is APN on
$\Fqq$. Applying Lemma \ref{lem:APN-fourth-moment} with $L=\Fqq$ gives
\begin{equation}\label{eq:total-Kasami-fourth}
\sum_{A\in\Fqq^*,\,B\in\Fqq}S_A(B)^4=2N^3(N-1).
\end{equation}
By Lemma~\ref{lem:class-moment}, the fourth moment depends only on the cubic class of $A$.  We first determine the two noncubic class moments. Lemma~\ref{lem:cube-class-subfield} implies that the inclusion
$\Fq^*\hookrightarrow\Fqq^*$ preserves and reflects the property of being a
cube.  We claim that the induced map
\[
  \Fq^*/(\Fq^*)^3\longrightarrow \Fqq^*/(\Fqq^*)^3
\]
is injective.  Indeed, suppose that two elements $A,B\in\Fq^*$ have the same
image in $\Fqq^*/(\Fqq^*)^3$.  Then $AB^{-1}\in(\Fqq^*)^3$.
Since $AB^{-1}\in\Fq^*$, Lemma~\ref{lem:cube-class-subfield} gives $AB^{-1}\in(\Fq^*)^3$. Thus $A$ and $B$ already represent the same class in
$\Fq^*/(\Fq^*)^3$.  Hence the induced map is injective. Both quotients have order three, so this map is a bijection.  Consequently each of the two noncube classes of $\Fqq^*$ has a representative $\alpha\in\Fq^*$ which is noncubic in $\Fq$.

For such a representative $\alpha$, \cite[Theorem~1.2 and Theorem~1.3]{Cheng2026} give $\W_{f_\alpha}(\beta)=\pm q$ for every $\beta\in\Fqq$: the subfield values are all $q$, and the outside values are $\pm q$.  Proposition~\ref{prop:component-Walsh-comparison} identifies the multiset of the $S_\alpha(B)$ with the multiset of the $\W_{f_\alpha}(\beta)$.  Hence, for each noncube class,
\[
  M_n=\sum_{B\in\Fqq}S_\alpha(B)^4=q^2q^4=q^6=N^3.
\]
It remains to determine the cubic class moment. Let $M_c$ be the moment for the cube class. Since each coset of $(\Fqq^*)^3$ in $\Fqq^*$ has size $(N-1)/3$, the class decomposition of
\eqref{eq:total-Kasami-fourth} gives
\[
  \frac{N-1}{3}M_c+2\frac{N-1}{3}M_n=2N^3(N-1).
\]
After cancellation, $M_c+2M_n=6N^3$.  Thus $M_c=4N^3=4q^6$.
\end{proof}

\begin{corollary}\label{cor:cubic-W-fourth}
If $\alpha\in\Fq^*$ is a cube, then
\[
  \sum_{\beta\in\Fqq}\W_{f_\alpha}(\beta)^4=4q^6.
\]
\end{corollary}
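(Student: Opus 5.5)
This follows at once from Proposition~\ref{prop:cubic-component-fourth} and Proposition~\ref{prop:component-Walsh-comparison}; the only point to check is that the cubic hypothesis transfers from $\Fq$ to $\Fqq$. Let $\alpha\in\Fq^*$ be a cube in $\Fq^*$. By Lemma~\ref{lem:cube-class-subfield}, $\alpha$ is then a cube in $\Fqq^*$. Indeed, the easy direction suffices: if $\alpha=\mu^3$ with $\mu\in\Fq^*\subset\Fqq^*$, then $\alpha\in(\Fqq^*)^3$.

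Proposition~\ref{prop:cubic-component-fourth}, applied with $A=\alpha$, gives
\[
  \sum_{B\in\Fqq}S_\alpha(B)^4=4q^6 .
\]

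Proposition~\ref{prop:component-Walsh-comparison} states that the multisets $\set{S_\alpha(\Gamma):\Gamma\in\Fqq}$ and $\set{\W_{f_\alpha}(\beta):\beta\in\Fqq}$ coincide. The correspondence is the bijection $a+b\eta\mapsto a+b\theta$ of $\Fqq$, with $a,b\in\Fq$. It is a bijection because $\{1,\eta\}$ and $\{1,\theta\}$ are both $\Fq$-bases of $\Fqq$, neither $\eta$ nor $\theta$ lying in $\Fq$. Two equal multisets have equal sums of fourth powers. Hence
\[
  \sum_{\beta\in\Fqq}\W_{f_\alpha}(\beta)^4=\sum_{B\in\Fqq}S_\alpha(B)^4=4q^6 .
\]

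There is no genuine obstacle. All the substantive work (the APN fourth-moment identity, the class decomposition, and the calibration of the noncubic classes) was already done in Proposition~\ref{prop:cubic-component-fourth}. The only points needing care are that the cube hypothesis is taken in $\Fq$ while Proposition~\ref{prop:cubic-component-fourth} is phrased for cubes in $\Fqq$, and that the comparison in Proposition~\ref{prop:component-Walsh-comparison} is a genuine bijection of index sets rather than merely an equality of value sets.
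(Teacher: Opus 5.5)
Your proposal is correct and follows the paper's proof: the paper also transfers the cube hypothesis to $\Fqq^*$ via Lemma~\ref{lem:cube-class-subfield}, applies Proposition~\ref{prop:cubic-component-fourth} with $A=\alpha$, and then uses the multiset identification of Proposition~\ref{prop:component-Walsh-comparison}. You add two small points the paper leaves implicit, and both are correct: only the trivial direction of the cube-transfer lemma is needed, and $a+b\eta\mapsto a+b\theta$ is a genuine bijection of $\Fqq$.
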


\begin{proof}
By Lemma~\ref{lem:cube-class-subfield}, $\alpha$ is also a cube in $\Fqq^*$.  Proposition~\ref{prop:cubic-component-fourth} gives the fourth moment for $S_\alpha$, and Proposition~\ref{prop:component-Walsh-comparison} identifies this moment with the fourth moment of $\W_{f_\alpha}$.
\end{proof}

\subsection{The outside values}

We need one final fact from \cite{Cheng2026}, namely the cubic divisibility obtained by combining the outside reduction with the intrinsic Hasse congruence.

\begin{lemma}\label{lem:cubic-divisibility}
If $\alpha\in\Fq^*$ is a cube and $\beta\in\Fqq\setminus\Fq$, then
\[
  \W_{f_\alpha}(\beta)\equiv0\pmod{2q}.
\]
\end{lemma}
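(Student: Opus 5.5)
This is the congruence quoted from \cite{Cheng2026}, and the plan is to recover it from the outside reduction of Proposition~\ref{prop:outside-reduction} together with a Stickelberger/Ax-type (Hasse) divisibility estimate. Write $\delta=\alpha b^{-3}$. Since $\alpha$ is a cube in $\Fq$, say $\alpha=\mu^3$, the substitution $u\mapsto (b/\mu)u$ transforms the right side of \eqref{eq:outside-reduction} into
\[
  \sum_{x,u\in\Fq}\chiq{\Phi(x)u^3+\kappa(c+x+1)u},\qquad \kappa=b\mu^{-1}\in\Fq^*.
\]
The claim is then equivalent to
\[
  \W_{f_\alpha}(\beta)+q\equiv q\pmod{2q}.
\]
So it suffices to show that this two-variable sum is congruent to $q$ modulo $2q$.

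First I would separate the slice $u=0$, which contributes exactly $q$, exactly as in Lemma~\ref{lem:punctured-Fourier}. The task then becomes showing
\[
  \widehat G_\delta(c+1,1)\equiv0\pmod{2q}.
\]
By the completion (Lemma~\ref{lem:Kasami-completion}, Lemma~\ref{lem:linear-term} and Proposition~\ref{prop:component-Walsh-comparison}), this equals $S_\delta(c+\eta)$, a Walsh coefficient of a component of $X^K$ on $\Fqq$.

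Next, because $\delta$ is a cube, Lemma~\ref{lem:gcd-K} lets me write $\delta=\lambda_0^K$ with $\lambda_0\in\Fqq^*$. Then $S_\delta(\Gamma)=S_1(\Gamma\lambda_0^{-1})$, so the divisibility reduces to the single component $\Tr_{q^2}(X^K)$. For this component I would bound the $2$-adic valuation of every Walsh coefficient using the Stickelberger/Gross--Koblitz form of Ax's theorem. The sum $\sum_X(-1)^{\Tr(X^K+\Gamma X)}$ is a sum of Gauss sum products indexed by solutions of $Kj_1+j_2\equiv0\pmod{2^{2e}-1}$. Its valuation is governed by the minimal value of $(s(j_1)+s(j_2))/1$, where $s$ denotes the binary digit sum, minus $2e$ accounted appropriately.

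The exponent $K=2^{2(e+1)}-2^{e+1}+1$ has cyclotomic representative $5-2q$, a weight-$3$ quantity, so the standard Hasse/Ax computation gives $v_2(S_1(\Gamma))\ge e+1$ for all $\Gamma$. This yields divisibility by $2q$. The alternative route of dividing out cube classes via Lemma~\ref{lem:cube-class-subfield} would only match the subfield spectrum given by Proposition~\ref{prop:subfield-spectrum}, so I would not pursue it.

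The main obstacle is this digit-sum minimization, namely verifying that no pair $(j_1,j_2)$ yields a valuation below $e+1$. The cube hypothesis matters here: for noncubic $\delta$, the coefficients are odd multiples of $q$, so the argument must use the $K$-th-power rescaling essentially. If the direct minimization is messy, I would instead apply Ax's theorem to the two-variable polynomial $\Phi(x)u^3+\kappa(c+x+1)u$ over $\Fq$, after clearing denominators of $\Phi$ via the parametrization $A\in\mathcal T$ from Proposition~\ref{prop:subfield-reduction-formula}. Failing that, I would simply cite the congruence as established in \cite{Cheng2026}.
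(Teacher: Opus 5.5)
\textbf{There is a genuine gap: the step that produces the extra factor $2$ fails, and only your final fallback (citing the congruence) proves the lemma.}

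Your reductions are all correct: dropping the slice $u=0$, identifying $\widehat G_\delta(c+1,1)=S_\delta(c+\eta)$ via the completion, and rescaling to $S_1(\Gamma\lambda_0^{-1})$ with $\delta=\lambda_0^K$. The gap is the central claim that a ``standard Hasse/Ax computation'' gives $v_2(S_1(\Gamma))\ge e+1$.

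Expand in Gauss sums $g(\chi)=\sum_{x\in\Fqq^*}\chi(x)(-1)^{\Tr_{q^2}(x)}$. For $A,\Gamma\ne0$ one gets
\[
  S_A(\Gamma)=1+\frac{1}{q^2-1}\sum_{\chi}g(\bar\chi)\,g(\chi^K)\,\chi(A\Gamma^{-K}),
\]
so $A$ enters only through the roots of unity $\chi(A\Gamma^{-K})$. The valuation of each term is therefore the same for every component, and that valuation is all a Stickelberger digit-sum minimization or an Ax-type bound can see. Rescaling to $A=1$ gains nothing.

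Moreover, the termwise bound is at most $e$, not $e+1$. For the two nontrivial cubic characters $\chi$ one has $\chi^K=1$ because $3\mid K$, so $g(\chi^K)=-1$. At the same time $v_2(g(\bar\chi))=s_2((q^2-1)/3)=e$; indeed $g(\chi)=-q$ by Davenport--Hasse. This has to be so, since you yourself note that noncube components have Walsh values that are odd multiples of $q$. Also, $K$ modulo $q^2-1$ is $q^2-2q+4$, which has binary weight $e$, so the ``weight-$3$'' heuristic does not describe the Stickelberger data.

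So the extra factor $2$ for cube $\delta$ comes from cancellation among minimal-valuation terms, not from a valuation bound. With $w=A\Gamma^{-K}$, the cubic-character terms sum to $q(\chi(w)+\chi^2(w))/(q^2-1)$. This is $\equiv0\pmod{2q}$ exactly when $w$ is a cube. You would still have to show that every other character contributes valuation at least $e+1$, which is the digit-sum problem you left open.

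Evaluating the cube-character part exactly and bounding the rest amounts to a genuine mod-$2q$ congruence. That is precisely the intrinsic Hasse congruence $\mathcal F_{\delta,\Lambda}\equiv q(1+\delta^{N/3}+\delta^{2N/3})\pmod{2q}$ of \cite[Prop.~2.4]{Cheng2026}, which the paper invokes. After that it only remains to note that $\kappa=\alpha b^{-3}$ is a cube.

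Your Ax fallback fails for the same reason. Ax-type bounds are uniform in the coefficients and give at most divisibility by $q$ here. They cannot yield the nonzero residue $q$ modulo $2q$ that you need for the full two-variable sum. Your last resort, citing the congruence from \cite{Cheng2026}, is the paper's proof.
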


\begin{proof}
Put $b=\beta+\beta^q\in\Fq^*$, choose $\theta^2+\theta=\lambda+1$ with $\theta^q=\theta+1$, and write $\beta=b(c+\theta)$ with $c\in\Fq$. Set $\kappa=\alpha b^{-3}$ and $\Lambda=c^2+c+\lambda$. The outside reduction in \cite[Prop.~2.3]{Cheng2026} gives
\[
  \W_{f_\alpha}(\beta)+q=\mathcal F_{\kappa,\Lambda},
\]
where
\[
\mathcal F_{\delta,\Lambda}=
\sum_{h,y\in\Fq}\chi_q\bigl(y(h^3+h^2+\Lambda h)+\delta y^3(h^8+h+\Lambda^4+\Lambda^2+\Lambda)\bigr).
\]
The intrinsic Hasse congruence \cite[Prop.~2.4]{Cheng2026} says that, with $N=q-1$,
\[
  \mathcal F_{\delta,\Lambda}
  \equiv q\bigl(1+\delta^{N/3}+\delta^{2N/3}\bigr)\pmod{2q},
\]
where the parenthesized element of $\F_2$ is identified with $0$ or $1$. Since $b^{-3}$ is a cube and $\alpha$ is a cube, $\kappa$ is a cube in $\Fq^*$. Thus $\kappa^{N/3}=\kappa^{2N/3}=1$, and $1+1+1=1$ in $\F_2$. Hence $\mathcal F_{\kappa,\Lambda}\equiv q\pmod{2q}$, and therefore $\W_{f_\alpha}(\beta)\equiv0\pmod{2q}$, as desired.
\end{proof}

\begin{proof}[Proof of Theorem~\ref{thm:cubic-outside}]
Assume that $\alpha$ is a cube in $\Fq^*$.  By Proposition~\ref{prop:subfield-spectrum}, the subfield fourth moment is
\[
  \sum_{\gamma\in\Fq}\W_{f_\alpha}(\gamma)^4
  =q(2q)^4=16q^5.
\]
Corollary~\ref{cor:cubic-W-fourth} gives the full fourth moment $4q^6$.  Hence the outside fourth moment is
\[
  \sum_{\gamma\in\Fqq\setminus\Fq}\W_{f_\alpha}(\gamma)^4
  =4q^6-16q^5=4q^5(q-4).
\]
Similarly, Walsh Plancherel and Proposition~\ref{prop:subfield-spectrum} give the outside square moment
\[
  \sum_{\gamma\in\Fqq\setminus\Fq}\W_{f_\alpha}(\gamma)^2
  =q^4-q(2q)^2=q^3(q-4).
\]
By Lemma~\ref{lem:cubic-divisibility}, for every outside $\gamma$ we may write
\[
  \W_{f_\alpha}(\gamma)=2qK_\gamma,
  \qquad K_\gamma\in\mathbb Z.
\]
The two moment formulas become
\[
  \sum_{\gamma\notin\Fq}K_\gamma^2=\frac{q(q-4)}4,
  \qquad
  \sum_{\gamma\notin\Fq}K_\gamma^4=\frac{q(q-4)}4.
\]
Therefore
\[
  \sum_{\gamma\notin\Fq}\bigl(K_\gamma^4-K_\gamma^2\bigr)=0.
\]
Note that each summand $K_\gamma^2(K_\gamma^2-1)$ is a nonnegative integer. It follows that $K_\gamma\in\{-1,0,1\}$ for every outside $\gamma$.  Thus all outside values lie in $\{-2q,0,2q\}$.

If $e=2$, then $q=4$ and the outside square moment is zero, so all outside values are zero.  Assume $e\ge4$.  The number of nonzero outside values is
\[
  \frac{q^3(q-4)}{(2q)^2}=\frac{q(q-4)}4.
\]
Since $\sigma(0)=0$, we have $f_\alpha(0)=0$.  Therefore the first Walsh orthogonality relation gives
\[
  \sum_{\gamma\in\Fqq}\W_{f_\alpha}(\gamma)
  =\sum_{x\in\Fqq}(-1)^{f_\alpha(x)}\sum_{\gamma\in\Fqq}(-1)^{\Tr_{q^2}(\gamma x)}
  =q^2.
\]
By Proposition \ref{prop:subfield-spectrum}(2), the subfield contribution is
\[
  \frac q4(-2q)+\frac{3q}{4}(2q)=q^2.
\]
Thus the outside sum is zero, and the values $2q$ and $-2q$ occur equally often.  Each occurs $q(q-4)/8$ times, and the remaining outside multiplicity is
\[
  q^2-q-\frac{q(q-4)}4=\frac{3q^2}{4}.
\]
For $q\ge16$ all three multiplicities are positive, so the outside value set is exactly $\{-2q,0,2q\}$.  The theorem follows.
\end{proof}

\begin{proof}[Proof of Corollary~\ref{cor:full-cubic}]
Add the subfield multiplicities from Proposition~\ref{prop:subfield-spectrum} to the outside multiplicities from Theorem~\ref{thm:cubic-outside}.  The multiplicity of $2q$ is
\[
  \frac{3q}{4}+\frac{q(q-4)}8=\frac{q(q+2)}8,
\]
where the outside term is zero when $q=4$.  The multiplicity of $-2q$ is
\[
  \frac q4+\frac{q(q-4)}8=\frac{q(q-2)}8.
\]
The zero multiplicity is $3q^2/4$.  Since $2q=2^{e+1}=2^{(2e+2)/2}$, the Boolean function is $2$-plateaued.
\end{proof}

\section{Conclusion}

We have determined the complete Walsh distribution of the permutation-inverse family \eqref{eq:falpha-def} for cubic parameters.  The result shows that these functions are exactly $2$-plateaued: the only nonzero Walsh values are $\pm2q$, and their multiplicities are explicit.  Thus the cubic side of the family is now described with the same spectral precision as the noncubic, bent side.

The method is the main point of the proof.  A punctured outside Fourier transform is completed by adding a line that is invisible to outside frequencies.  The completed transform is then identified with a component of a Kasami monomial.  This converts a local divisibility statement, coming from the Hasse congruence, into a global moment calculation governed by the APN property of the Kasami power.  The fourth moment so obtained rules out all larger multiples of $2q$ and leaves only $0$ and $\pm2q$.  This completion-to-APN-moment mechanism may be useful in other Walsh-spectrum problems where a punctured exponential sum is naturally adjacent to a differentially uniform monomial.

\end{document}